\newtheorem{theorem}{Theorem}[section]
\newtheorem{proposition}[theorem]{Proposition}
\newtheorem{lemma}[theorem]{Lemma}
\newtheorem{corollary}[theorem]{Corollary}
\theoremstyle{remark}
\newtheorem{remark}[theorem]{Remark}
\theoremstyle{definition}
\newtheorem{definition}[theorem]{Definition}
\newcommand{\Z}{\mathbb Z}
\newcommand{\Q}{\mathbb Q}
\newcommand{\R}{\mathbb R}
\newcommand{\C}{\mathbb C}
\newcommand{\SU}{\mathit{SU}}
\newcommand{\SL}{\mathit{SL}}
\newcommand{\PSL}{\mathit{PSL}}
\newcommand{\tr}{\mathop{\mathrm{tr}}\nolimits}
\newcommand{\sh}{\mathop{\mathrm{sh}}\nolimits}
\begin{document}

\title{$\SL(2;\R)$-representations of a Brieskorn homology 3-sphere}

\author{Teruaki Kitano and Yoshikazu Yamaguchi}

\begin{abstract}
We classify $\SL(2;\C)$-representations of a Brieskorn homology 3-sphere.
We show any irreducible representation into $\SL(2;\C)$ 
is conjugate to that into 
either $\SU(2)$ or $\SL(2;\R)$.
We also give a construction of  
$\SL(2;\R)$-representations for a Brieskorn homology 3-sphere
from $\PSL(2;\R)$-representations of the base orbifold fundamental group.
\end{abstract}

\thanks{2010 {\it Mathematics Subject Classification}.
57M25}

\thanks{{\it Key words and phrases.\/} 
Brieskorn homology sphere, 
$\SL(2;\C)$-representation, 
$\SU(2)$-representation, 
$\PSL(2;\R)$-representation.}

\address{Department of Information Systems Science, 
Faculty of Science and Engineering, 
Soka University, 
Tangi-cho 1-236, 
Hachioji, Tokyo 192-8577, Japan}

\email{kitano@soka.ac.jp}

\address{Department of Mathematics, 
Akita University, 
1-1 Tegata-Gakuenmachi, Akita, 010-8502, Japan}

\email{shouji@math.akita-u.ac.jp}
\maketitle

\section{Introduction}\label{section:1}
In~\cite{Yamaguchi}, the second author studied the asymptotic behavior 
of the Reidemeister torsion for Seifert fibered spaces with
$\SL(2;\C)$-representations which are not realized by $\SU(2)$-ones.
We are interested in finding out what is the difference 
between irreducible $\SL(2;\C)$-representations and $\SU(2)$-ones
in the cases of Brieskorn homology 3-spheres.
We will see that irreducible $\SL(2;\C)$-representations consists of
$\SU(2)$-ones and $\SL(2;\R)$-ones, up to conjugate. 
The latter ingredients are constructed by homomorphisms from 
the fundamental group of the base orbifold into $\PSL(2;\R)$. 
We give a concrete correspondence between
$\SL(2;\R)$-representations of a Brieskorn homology 3-sphere
and 
$\PSL(2;\R)$-representations 
of the fundamental group of the base orbifold.
This correspondence is derived by covering maps 
in a sequence of Seifert fibered spaces.

Let $\Sigma(a_1,a_2,a_3)$ be a Brieskorn homology 3-sphere 
which is defined by
\[
\Sigma(a_1,a_2,a_3)
=\{(z_1,z_2,z_3)\in\C^3\ |\ z_1^{a_1}+z_2^{a_2}+z_3^{a_3}=0,  |z_1|^2+|z_2|^2+|z_3|^2=1
\}.
\]
Here $a_1,a_2,a_3$ are pairwise coprime positive integers. 
We may assume $a_2$, $a_3$ are odd without the loss of generality.

Our main result in this paper is the following. 
\begin{theorem}
  Let $\rho$ be an irreducible representation
  of $\pi_1 \Sigma(a_1, a_2, a_3)$ into $\SL(2;\C)$.
  If $\rho$ is not conjugate to an $\SU(2)$-representation, 
  then $\rho$ is conjugate to an $\SL(2;\R)$-representation which is induced by  
  a homomorphism from the fundamental group
  of the base orbifold into $\PSL(2;\R)$.
\end{theorem}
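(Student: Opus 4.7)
The plan is to exploit the Seifert fibered structure of $\Sigma(a_1,a_2,a_3)$ and invoke the classical real-character criterion that characterizes $2$-generator subgroups of $\SL(2;\C)$ conjugate into $\SU(2) \cup \SL(2;\R)$.

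First I would invoke the standard Seifert presentation
\[
\pi_1 \Sigma(a_1,a_2,a_3) = \langle x_1, x_2, x_3, h \mid [h, x_i] = 1,\ x_i^{a_i} = h^{-b_i},\ x_1 x_2 x_3 = h^{b_0} \rangle,
\]
in which $h$ generates the infinite cyclic center. Since $\rho$ is irreducible, Schur's lemma forces $\rho(h) = \epsilon I$ with $\epsilon \in \{\pm 1\}$, and then $\rho(x_i)^{a_i} = \epsilon^{-b_i} I$ shows that each $\rho(x_i)$ has finite order in $\PSL(2;\C)$. Consequently every $\rho(x_i)$ is either $\pm I$ or conjugate to a rotation, giving $\tr \rho(x_i) \in [-2,2] \subset \R$. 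The relation $x_1 x_2 x_3 = h^{b_0}$ then yields $\rho(x_1 x_2) = \epsilon^{b_0}\rho(x_3)^{-1}$, so $\tr \rho(x_1 x_2) = \epsilon^{b_0}\tr \rho(x_3) \in \R$ as well.

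Next, the subgroup $\langle \rho(x_1), \rho(x_2) \rangle$ is itself irreducible: any common eigenvector would also be fixed (up to scalar) by $\rho(x_3) = \epsilon^{b_0}\rho(x_1 x_2)^{-1}$ and by the scalar $\rho(h)$, contradicting the irreducibility of $\rho$. With $\tr\rho(x_1)$, $\tr\rho(x_2)$, $\tr\rho(x_1 x_2)$ all real, the Fricke--Klein criterion produces $P \in \SL(2;\C)$ such that $P\rho(x_1)P^{-1}$ and $P\rho(x_2)P^{-1}$ simultaneously lie in either $\SU(2)$ or $\SL(2;\R)$, the dichotomy being governed by whether $\tr[\rho(x_1),\rho(x_2)]$ lies below or above $2$. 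Because $\rho(x_3)$ and $\rho(h)$ are determined by $\rho(x_1)$ and $\rho(x_2)$ through the defining relations, the full image $P\rho(\pi_1\Sigma)P^{-1}$ is contained in the same subgroup, and the hypothesis that $\rho$ is not $\SU(2)$-conjugate forces the $\SL(2;\R)$ alternative.

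Finally, since $\rho(h) = \pm I$, composing $P\rho P^{-1}$ with the projection $\SL(2;\R) \to \PSL(2;\R)$ kills $h$, and the composition factors through the quotient
\[
\pi_1 \Sigma(a_1,a_2,a_3)/\langle h \rangle \cong \pi_1^{\mathrm{orb}}(S^2(a_1,a_2,a_3))
\]
to yield the required $\PSL(2;\R)$-representation of the base orbifold fundamental group; the original $\SL(2;\R)$-representation is then a lift of it. The main technical step will be the real-trace dichotomy in the third paragraph: although ``three real basic traces and irreducibility imply conjugacy into $\SU(2) \cup \SL(2;\R)$'' is classical, carrying it out concretely means selecting a normal form (for instance, diagonalizing $\rho(x_1)$) and checking directly that the off-diagonal parameters of $\rho(x_2)$ are forced to be either unitary or real according to the sign of $\tr[\rho(x_1),\rho(x_2)] - 2$.
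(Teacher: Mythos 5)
Your proposal is correct in outline, but it takes a genuinely different route from the paper. The paper's proof is enumerative: it quotes Boden--Curtis for $|\mathcal{R}_{\SL(2;\C)}| = \tfrac{1}{4}(a_1-1)(a_2-1)(a_3-1)$ and Fintushel--Stern for $|\mathcal{R}_{\SU(2)}|$, so that the difference equals $|X_0|=|E|$; it then uses Jankins--Neumann to build, for each admissible euler class $eu\in E$, an explicit $\SL(2;\R)$-representation $\rho_{eu}$ (by lifting an orbifold $\PSL(2;\R)$-representation to $\widetilde{\PSL}(2;\R)$ on an auxiliary Seifert space $\overline{M}$ and pulling back along the abelian cover $\Sigma\to\overline{M}$), and closes the count by proving that irreducible $\SU(2)$- and $\SL(2;\R)$-representations are never conjugate and that $eu\mapsto[\rho_{eu}]$ is injective via the trace triple. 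You instead argue directly: $\rho(h)=\pm I$ by Schur, each $\rho(x_i)$ has finite order modulo the center so all three basic traces are real and lie in $[-2,2]$, and the classical Fricke-type dichotomy for irreducible two-generator subgroups with real character then forces conjugacy into $\SU(2)$ or $\SL(2;\R)$; passing to $\PSL(2;\R)$ kills $h$ and gives the orbifold representation. Your route is more elementary and self-contained (it does not invoke the Casson-invariant computations at all, and it explains \emph{why} $\mathcal{R}_{\SL(2;\C)}$ splits as $\mathcal{R}_{\SU(2)}\sqcup\mathcal{R}_{\SL(2;\R)}$ rather than deducing it from a numerical coincidence), and it generalizes to any group whose generators have finite order modulo a central element. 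What it does not deliver is the finer content of the paper's Section~3 theorem and its corollary: the parametrization of $\mathcal{R}_{\SL(2;\R)}$ by euler classes in $E\cong X_0$, the uniqueness of that euler class, and hence the count $|\mathcal{R}_{\SL(2;\R)}|=|X_0|$. Two small points to tighten: the $\SU(2)$/$\SL(2;\R)$ dichotomy via the sign of $\tr[\rho(x_1),\rho(x_2)]-2$ requires that the three traces already lie in $[-2,2]$ (which your finite-order observation does supply, so say so explicitly), and you should note that irreducibility gives $\tr[\rho(x_1),\rho(x_2)]\neq 2$ so the dichotomy is well posed.
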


It is well known that $\Sigma(a_1,a_2,a_3)$ admits a structure of Seifert fibered space  
\[
\Sigma(a_1,a_2,a_3)\rightarrow S^2(a_1,a_2,a_3)
\]
such that the base orbifold is $S^2(a_1,a_2,a_3)$, which is topologically $S^2$ with three cone points. 
We write $\Gamma(a_1,a_2,a_3)$ to the orbifold fundamental group of $S^2(a_1,a_2,a_3)$ 
with a presentation 
\[
\Gamma(a_1,a_2,a_3)=
\langle
x,y,z\ |\ x^{a_1}=y^{a_2}=z^{a_3}=xyz=1
\rangle.
\]

In this article, 
we consider irreducible representations of $\pi_1\Sigma(a_1,a_2.a_3)$ into Lie groups $\SL(2;\C),\SU(2),\SL(2;\R)$. 
For a Lie group $G$,  
we denote the space of conjugacy classes of irreducible representations 
of $\pi_1\Sigma(a_1,a_2,a_3)$ into $G$ by $\mathcal{R}_G$. 
\begin{remark}
  We consider conjugacy classes of irreducible representations
  under conjugation by $\SL(2;\C)$.
\end{remark}

There are many studies on $\mathcal{R}_{\SU(2)}$, and $\mathcal{R}_{\SL(2;\C)}$ 
which are related with $\SU(2)$, or $\SL(2;\C)$-Casson invariant of a homology 3-sphere. 

In~\cite{Fintushel-Stern} 
Fintushel and Stern proved that $\mathcal{R}_{\SU(2)}$ is a finite set. 
In particular the number is expressed as 
\[
|\mathcal{R}_{\SU(2)}|
=\frac{(a_1-1)(a_2-1)(a_3-1)}{4}-|X_0|
\]
where the finite set $X_0$ is defined by
\[
X_0=\{(k,l,m)\ |\ 0<k<a_1,0<l<a_2,0<m<a_3,\frac{k}{a_1}+\frac{l}{a_2}+\frac{m}{a_3}<1\}.
\]
They also proved that 
\[
|\mathcal{R}_{\SU(2)}|= 2 |\lambda(\Sigma(a_1,a_2,a_3))|
\]
where $\lambda(\Sigma(a_1,a_2,a_3))$ is the Casson invariant of $\Sigma(a_1,a_2,a_3)$. 

\begin{remark}
In fact they studied the space of conjugacy classes of $\SU(2)$-representations 
under conjugation by $\SU(2)$ itself. 
However it is equal to $\mathcal{R}_{\SU(2)}$ for a Brieskorn homology 3-sphere.
\end{remark}

Curtis~\cite{Curtis} defined 
the $\SL(2;\C)$-Casson invariant for homology 3-spheres 
by counting conjugacy classes of irreducible $\SL(2;\C)$-representations. 
Further Boden and Curtis~\cite{Boden-Curtis} proved the equality that 
\[
\begin{split}
  \frac{(a_1-1)(a_2-1)(a_3-1)}{4}
  &=|\mathcal{R}_{\SL(2;\C)}|\\
  &=\lambda_{\SL(2;\C)}(\Sigma(a_1,a_2,a_3))
\end{split}
\]
by looking into representations of $\Gamma(2a_1,a_2,a_3)$ in $\SL(2;\C)$.

Hence Fintushel-Stern's formula can be expressed as 
\[
\lambda_{\SL(2;\C)}(\Sigma(a_1,a_2,a_3))-2|\lambda(\Sigma(a_1,a_2,a_3))|
=|X_0|.
\]

On the other hand, 
Jankins and Neumann~\cite{Jankins-Neumann85} 
studied the space of 
homomorphisms from $\Gamma(a_1,a_2,a_3)$
into $\PSL(2;\R)$. 
It is not a finite set, but connected components 
can be classified by euler classes for central extensions 
of $\Gamma(a_1,a_2,a_3)$ by $\Z$, 
which belong to $H^2(\Gamma(a_1,a_2,a_3);\Z)$.  
The number of connected components coincides with $2|X_0|$.

\begin{corollary}
  We have that 	 
  \[\lambda_{\SL(2;\C)}(\Sigma(a_1,a_2,a_3))
  -2|\lambda(\Sigma(a_1,a_2,a_3))|
  =|\mathcal{R}_{\SL(2;R)}|.\]
\end{corollary}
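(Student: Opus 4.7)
The plan is to combine the main theorem with the Boden--Curtis and Fintushel--Stern counting formulas recalled earlier in the introduction. The corollary reduces to proving the set-theoretic decomposition
\[
\mathcal{R}_{\SL(2;\C)} = \mathcal{R}_{\SU(2)} \sqcup \mathcal{R}_{\SL(2;\R)},
\]
after which substituting $|\mathcal{R}_{\SL(2;\C)}| = \lambda_{\SL(2;\C)}(\Sigma(a_1,a_2,a_3))$ (Boden--Curtis) and $|\mathcal{R}_{\SU(2)}| = 2|\lambda(\Sigma(a_1,a_2,a_3))|$ (Fintushel--Stern) yields the stated identity.

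First I would check the covering $\mathcal{R}_{\SL(2;\C)} = \mathcal{R}_{\SU(2)} \cup \mathcal{R}_{\SL(2;\R)}$. This is an immediate restatement of the main theorem: every $\SL(2;\C)$-conjugacy class of irreducible representations either has an $\SU(2)$ representative, or, by the theorem, an $\SL(2;\R)$ representative produced from a $\PSL(2;\R)$-representation of the orbifold group $\Gamma(a_1,a_2,a_3)$.

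Next I would argue disjointness. The character of any $\SU(2)$-representation takes values in $[-2,2]$. On the other hand, an $\SL(2;\R)$-representation of $\pi_1 \Sigma(a_1,a_2,a_3)$ obtained from a $\PSL(2;\R)$-representation of $\Gamma(a_1,a_2,a_3)$ that remains irreducible in $\SL(2;\C)$ must have non-elementary image in $\PSL(2;\R)$, since an elementary subgroup is conjugate into a one-parameter subgroup and would make the representation reducible over $\SL(2;\C)$. Any non-elementary subgroup of $\PSL(2;\R)$ contains hyperbolic elements, so the corresponding $\SL(2;\R)$-representation has at least one element whose trace has absolute value strictly greater than $2$. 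Since the trace function is a conjugation invariant under $\SL(2;\C)$, no conjugacy class can lie in both $\mathcal{R}_{\SU(2)}$ and $\mathcal{R}_{\SL(2;\R)}$.

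The disjointness step is the only non-formal part of the argument; the rest is bookkeeping. I would also remark that the spherical base-orbifold cases (where $\Gamma(a_1,a_2,a_3)$ is finite and hence admits no irreducible $\PSL(2;\R)$-representation) must be handled separately, but there $\mathcal{R}_{\SL(2;\R)}=\emptyset$ and $|X_0|=0$ simultaneously, so the formula holds trivially and poses no real obstruction.
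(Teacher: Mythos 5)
Your overall route is the same as the paper's: write $\mathcal{R}_{\SL(2;\C)}$ as the disjoint union $\mathcal{R}_{\SU(2)} \sqcup \mathcal{R}_{\SL(2;\R)}$ (the covering being the main theorem, the disjointness being the only substantive point) and then substitute the Boden--Curtis count $|\mathcal{R}_{\SL(2;\C)}| = \lambda_{\SL(2;\C)}$ and the Fintushel--Stern count $|\mathcal{R}_{\SU(2)}| = 2|\lambda|$. Where you genuinely diverge is in the disjointness step. The paper proves it (Proposition~\ref{prop:su_sl}) by a purely algebraic lemma: if $A,B \in \SU(2)$ can be simultaneously conjugated into $\SL(2;\R)$ by some $P \in \SL(2;\C)$, then $AB=BA$; this is done with M\"obius transformations (the conjugating map sends $\R P^1$ to the unit circle) and immediately forces any $\SU(2)$-representation conjugate into $\SL(2;\R)$ to be abelian. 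Your argument instead exhibits a trace obstruction: an irreducible $\SL(2;\R)$-representation must contain an element of trace outside $[-2,2]$, which no $\SU(2)$-representation can match. Both work; the paper's lemma is arguably cleaner because it needs no dynamics and applies verbatim to arbitrary finitely generated groups.

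One step of your trace argument is misstated, though the conclusion is salvageable. You claim that an elementary subgroup of $\PSL(2;\R)$ is conjugate into a one-parameter subgroup and hence gives a reducible representation. This is false for elementary subgroups of dihedral type (those preserving a geodesic without fixing its endpoints): their lifts to $\SL(2;\R)$ can be irreducible, so ``irreducible $\Rightarrow$ non-elementary'' does not hold in general. The fix is either to argue directly that a subgroup of $\PSL(2;\R)$ containing \emph{no} hyperbolic element is conjugate into the upper triangular subgroup or into $\mathit{PSO}(2)$ (hence lifts to a reducible representation) --- which yields your trace bound without ever invoking non-elementarity --- or to note that an infinite dihedral image would give a surjection of $\pi_1\Sigma(a_1,a_2,a_3)$ onto $\Z/2$, impossible since the group is perfect. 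Your closing remark about the spherical case $\Sigma(2,3,5)$ is correct but unnecessary: the decomposition holds there with $\mathcal{R}_{\SL(2;\R)} = \emptyset$, exactly as the paper observes.
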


\section{Preliminaries}\label{section:preliminary}
\subsection{Seifert invariants} 
We say that 
a Seifert fibered space with three singular fibers over the sphere
has a Seifert invariant
\[
\{0;(1,b), (a_1,b_1), (a_2,b_2), (a_3,b_3)\}
\] 
where $b$ and $b_i$ are integers. 
Here $0$ is the genus of $S^2(a_1,a_2,a_3)$. 
We write 
$M(0;(1,b), (a_1,b_1), (a_2,b_2), (a_3,b_3))$
to the Seifert fibered space by the above invariant. 
Please see~\cite{Jankins-Neumann83, Orlik} 
for general theory of Seifert fibered spaces.

By using these invariants, 
the fundamental group $\pi_1\Sigma(a_1,a_2,a_3)$ has the following presentation:
\[
\langle
x,y,z,h\ |\ h\text{ central}, x^{a_1}=h^{-b_1},y^{a_2}=h^{-b_2},z^{a_3}=h^{-b_3},xyz=h^{b}
\rangle.
\]

We use the following convention of the euler number for 
a Seifert fibration. 

\begin{definition}
  For
  $M=M(0;(1, b),(a_1,b_1),(a_2,b_2),(a_3,b_3))$,
  we will denote the euler number by $e(M)$ defined as
  \[
  e(M)
  =-\left(
  b+\frac{b_1}{a_1}+\frac{b_2}{a_2}+\frac{b_3}{a_3} 
  \right)\in\Q .
  \]
\end{definition}

\begin{proposition}
  For any $M=M(0;(1,b);(a_1,b_1),(a_2,b_2),(a_3,b_3))$, 
  one has
  \[
  H_1(M;\Z)
  \simeq\Z/a|e(M)|
  \]
  where $a=a_1 a_2 a_3$. 
\end{proposition}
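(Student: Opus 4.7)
The plan is to compute $H_1(M;\Z)$ directly from the presentation of $\pi_1 M$ given earlier by abelianizing and then reading off the Smith normal form of the resulting presentation matrix. Since $h$ is central, abelianization of the four non-centrality relations produces
\[
H_1(M;\Z)\simeq \Z^4/A\Z^4,\qquad
A=\begin{pmatrix}
a_1 & 0 & 0 & b_1\\
0 & a_2 & 0 & b_2\\
0 & 0 & a_3 & b_3\\
1 & 1 & 1 & -b
\end{pmatrix},
\]
on the ordered generators $x,y,z,h$. Cofactor expansion along the last row yields $\det A = a\cdot e(M)$, so $|\det A|=a|e(M)|$; this already pins down the order of $H_1(M;\Z)$.

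The remaining content of the proposition is that $H_1(M;\Z)$ is in fact \emph{cyclic}. I would establish this by computing the elementary divisors $d_1\mid d_2\mid d_3\mid d_4$ of $A$ through the classical formula $d_1\cdots d_k=\gcd\{k{\times}k\text{-minors of }A\}$. The entry $1$ in position $(4,1)$ forces $d_1=1$. For $d_2$, among the $2{\times}2$-minors one spots $\pm a_i a_j$ for each pair $i\neq j$ in $\{1,2,3\}$ (use rows $i,j$ and columns $i,j$). Pairwise coprimality of $a_1,a_2,a_3$, which is in force throughout the Brieskorn setting, gives $\gcd(a_1a_2,a_1a_3,a_2a_3)=1$, so $d_2=1$. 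The same input handles $d_3$: removing row $k\in\{1,2,3\}$ and column $4$ produces the $3{\times}3$-minor $\pm a_ia_j$ with $\{i,j\}=\{1,2,3\}\setminus\{k\}$, and these three minors again have overall gcd $1$, so $d_3=1$. Therefore $d_4=|\det A|/(d_1d_2d_3)=a|e(M)|$, the Smith normal form of $A$ is $\operatorname{diag}(1,1,1,a|e(M)|)$, and $H_1(M;\Z)\simeq \Z/a|e(M)|$.

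The main obstacle is not analytic but combinatorial: one has to locate the correct $2{\times}2$ and $3{\times}3$ minors so that pairwise coprimality collapses each successive $\gcd$ to $1$. Once these minors are identified, the rest of the argument is a routine determinant expansion together with the standard identification of Smith elementary divisors with $k$-th minor gcds.
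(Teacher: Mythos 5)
Your computation is correct. The paper itself offers no proof of this proposition---it simply cites Jankins--Neumann---and your argument (abelianize the standard presentation, compute $\det A = a\,e(M)$ by expansion along the last row, then force $d_1=d_2=d_3=1$ via the unit entry and the pairwise coprimality of $a_1,a_2,a_3$) is exactly the standard one; the minors you exhibit do the job as claimed. The only point worth a passing remark is the degenerate case $e(M)=0$, where ``pins down the order'' should be read as giving $H_1\simeq\Z/0\cong\Z$ via $d_4=0$ in the Smith normal form; this is consistent with the statement and with how the proposition is used later, since the paper only invokes it when $a|e(M)|\neq 0$.
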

For the proof, see~\cite{Jankins-Neumann83}.

\begin{corollary}
  If $a |e(M)| =1$,
  then $M$ is a homology 3-sphere. 
  In particular $M\cong\Sigma(a_1,a_2,a_3)$. 
\end{corollary}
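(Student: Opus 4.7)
The plan is to peel off the two assertions separately. The first is immediate from the preceding proposition: substituting $a|e(M)| = 1$ into $H_1(M;\Z) \simeq \Z/(a|e(M)|)$ gives $H_1(M;\Z) = 0$, and since $M$ is a closed orientable $3$-manifold, Poincar\'e duality and the universal coefficient theorem force $H_\ast(M;\Z) \cong H_\ast(S^3;\Z)$.

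For the identification $M \cong \Sigma(a_1,a_2,a_3)$, I would appeal to the classification of Seifert fibered spaces over $S^2$ with three exceptional fibers, as developed in \cite{Jankins-Neumann83, Orlik}. The relevant facts are: (i) the unnormalized Seifert invariants are only well-defined up to the moves $(b_i,b)\mapsto(b_i+k_ia_i,\,b-\sum_i k_i)$, and the rational euler number $e$ is invariant under these moves; (ii) up to orientation, the Seifert invariants modulo these moves form a complete homeomorphism invariant among Seifert fibered spaces over $S^2$ with prescribed multiplicities. The Brieskorn sphere $\Sigma(a_1,a_2,a_3)$ carries a Seifert fibration with base $S^2(a_1,a_2,a_3)$ and, by the proposition applied to itself (as a homology sphere), also satisfies $a|e|=1$. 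Thus both $M$ and $\Sigma(a_1,a_2,a_3)$ share the multiplicities $a_1,a_2,a_3$ and the value $|e|=1/a$, so repeated application of the moves in (i) brings the invariants of $M$ into the normal form for those of $\Sigma(a_1,a_2,a_3)$, yielding the desired homeomorphism.

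The main obstacle is bookkeeping with signs. Depending on the chosen orientation conventions one may only obtain $M\cong\pm\Sigma(a_1,a_2,a_3)$, so either the corollary tacitly absorbs this ambiguity into the symbol $\cong$, or one checks that the sign of $e(M)$ determined by the integers $b,b_1,b_2,b_3$ matches the sign for the standard Seifert structure on $\Sigma(a_1,a_2,a_3)$. A cleaner alternative, which I would actually prefer to write up, is to argue through fundamental groups: the presentation of $\pi_1 M$ recorded in Section~\ref{section:preliminary} can be reduced by Tietze transformations corresponding to the allowable moves to the standard presentation of $\pi_1\Sigma(a_1,a_2,a_3)$, after which one invokes the fact that small Seifert fibered spaces are determined by $\pi_1$.
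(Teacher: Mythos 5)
The paper states this corollary without proof, treating it as an immediate consequence of the preceding proposition together with the standard classification of Seifert fibered spaces, and your argument supplies exactly those implicit details: $H_1=0$ plus Poincar\'e duality for the first claim, and the completeness of the (normalized) Seifert invariants with fixed multiplicities and $|e|=1/a$ for the identification with $\Sigma(a_1,a_2,a_3)$. Your proposal is correct, and your remark that one only obtains $M\cong\pm\Sigma(a_1,a_2,a_3)$ unless the sign of $e$ is pinned down is a legitimate point that the paper's conventions quietly absorb.
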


Here and subsequently, 
we take $b,b_1,b_2,b_3\in\Z$ as 
\[
  a\left(
  b+\frac{b_1}{a_1}+\frac{b_2}{a_2}+\frac{b_3}{a_3} 
  \right)
  = 1.
\]

Since $h$ is corresponding to the regular fiber of  $\Sigma(a_1,a_2,a_3)$, 
one has a short exact sequence 
\[
1\rightarrow
\langle h\rangle
\rightarrow
\pi_1\Sigma(a_1,a_2,a_3)
\rightarrow
\Gamma(a_1,a_2,a_3)=\pi_1\Sigma(a_1,a_2,a_3)/\langle h\rangle
\rightarrow 1.
\]
Further this is a central extension of $\Gamma(a_1,a_2,a_3)$ by $\langle h\rangle\cong\Z$. 

If one choose integers $\beta$ and $\beta_i$ such that 
\[
a\left|
\beta+\frac{\beta_1}{a_1}+\frac{\beta_2}{a_2}+\frac{\beta_3}{a_3} 
\right|
\neq 1,
\]
then 
the corresponding Seifert fibered space
\[\overline{M} = M(0;(1, \beta), (a_1, \beta_1), (a_2, \beta_2), (a_3, \beta_3))\]
is not a homology 3-sphere. 
We use the symbols $\overline{M}$, $\beta$ and $\beta_i$ for a Seifert rational homology 3-sphere.

\subsection{$\SL(2;\R)$-representation}\label{section:4}
The projective group 
$\PSL(2;\R)$ is the quotient $\SL(2;\R) / \{\pm I\}$ 
where $I$ is the identity matrix.
$\PSL(2;\R)$ acts on $\R P^1\cong S^1$ as protectively automorphisms. 
Let $\widetilde{\PSL}(2;\R)$ be the universal cover of $\PSL(2;\R)$. 
$\widetilde{\PSL}(2;\R)$ also acts on $\R^1$ which is the universal cover of $\R P^1$. 
Then $\widetilde{\PSL}(2;\R)$ can be considered as a subgroup 
in the group of homeomorphisms of $\R$. 
For any $r\in\R$, 
we define 
\[
\sh(r):\R\ni x\mapsto x+r\in\R.
\]
The center of $\widetilde{\PSL}(2;\R)$ is given by 
$\{\sh(n):\R\rightarrow\R\ |\ n\in\Z\} = \langle \sh(1) \rangle \cong \Z$. 

\begin{lemma}
  This short exact sequence 
  \[
  0\rightarrow\Z\rightarrow
  \widetilde{\PSL}(2;\R)\rightarrow\PSL(2;\R)
  \rightarrow 0\]
  is a central extension of $\PSL(2;\R)$ by $\Z=\langle \sh(1) \rangle$.
\end{lemma}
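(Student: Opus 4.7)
The plan is to verify two things: first, that the displayed sequence is exact (that is, the image of $\Z$ equals the kernel of the covering map $p\colon\widetilde{\PSL}(2;\R)\to\PSL(2;\R)$), and second, that this image lies in the center of $\widetilde{\PSL}(2;\R)$. The centrality is in fact already asserted in the paragraph preceding the lemma, where it is stated that the center of $\widetilde{\PSL}(2;\R)$ equals $\langle \sh(1)\rangle$; so the main content is to pin down that $\langle \sh(1)\rangle$ coincides with $\ker p$.

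For exactness, I would use the description of $\widetilde{\PSL}(2;\R)$ as the group of homeomorphisms of $\R$ obtained by lifting the action of $\PSL(2;\R)$ on $\R P^1\cong \R/\Z$ along the universal cover $\R\to\R/\Z$. Under this identification the projection $p$ sends a lift $\tilde g\in\widetilde{\PSL}(2;\R)$ to the homeomorphism of $\R P^1$ it covers. Hence an element of $\widetilde{\PSL}(2;\R)$ lies in $\ker p$ exactly when it covers the identity, i.e.\ when it is a deck transformation of $\R\to\R/\Z$. The deck group of this cover is generated by the unit translation $\sh(1)$, so $\ker p=\langle\sh(1)\rangle$, and the map $\Z\to\widetilde{\PSL}(2;\R)$, $n\mapsto \sh(n)$, gives the desired identification.

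For centrality, one clean route is to invoke the general fact that a discrete normal subgroup of a connected topological group is central: since $\widetilde{\PSL}(2;\R)$ is connected and $\ker p$ is a discrete normal subgroup (being the kernel of a covering homomorphism onto a Lie group), it lies in the center. Alternatively, one can argue directly by observing that for any $\tilde g\in\widetilde{\PSL}(2;\R)$ and any $n\in\Z$, the two maps $\tilde g\circ \sh(n)$ and $\sh(n)\circ \tilde g$ both cover $p(\tilde g)$, and they agree at any point of $\R$ because $\tilde g$ commutes with the deck transformations of the $\Z$-cover $\R\to\R/\Z$ (which is a standard property of lifts of equivariant actions).

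The only subtlety, and what I would consider the main obstacle, is being careful about the identification of $\widetilde{\PSL}(2;\R)$ with a subgroup of $\mathrm{Homeo}(\R)$; once this is set up, both exactness and the identification of the kernel with $\langle\sh(1)\rangle$ follow immediately from covering space theory, and centrality is either a general fact or a direct verification. Everything else in the statement is formal.
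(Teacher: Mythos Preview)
The paper states this lemma without proof; it is treated as a standard fact about the universal cover of $\PSL(2;\R)$, with the preceding paragraph already asserting that the center of $\widetilde{\PSL}(2;\R)$ equals $\langle \sh(1)\rangle$. Your proposal therefore supplies an argument the paper omits, and it is correct. Your exactness argument via covering space theory (identifying $\ker p$ with the deck group $\langle \sh(1)\rangle$) and your first centrality argument (a discrete normal subgroup of a connected topological group is central) are both standard and clean. Your second, ``direct'' centrality argument is slightly circular as phrased---saying $\tilde g$ commutes with the deck transformations is exactly the claim---but it can be made precise by noting that any lift $\tilde g$ of an orientation-preserving homeomorphism of $\R/\Z$ satisfies $\tilde g(x+1)=\tilde g(x)+1$, which is the commutation with $\sh(1)$.
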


Recall that $\Gamma(a_1,a_2,a_3)$ is the quotient group of $\pi_1\Sigma(a_1,a_2,a_3)$ 
by the center $\langle h \rangle$ with the following presentation:
\[
\Gamma(a_1,a_2,a_3)
=\langle x,y,x \ |\ x^{a_1}=y^{a_2}=z^{a_3}=xyz=1\rangle.
\]

Given a homomorphism 
$f:\Gamma(a_1,a_2,a_3)\rightarrow \PSL(2;\R)$, 
we take the pull--back of the central extension 
\[
0\rightarrow\Z\rightarrow
\widetilde{\PSL}(2;\R)\rightarrow\PSL(2;\R)
\rightarrow 0.\]
This gives a commutative diagram 
\[
\begin{CD}
0@>>>\Z @>>>\widetilde{\Gamma}(a_1,a_2,a_3)@>>>\Gamma(a_1,a_2,a_3)@>>> 0\\
@. @| @VVV @VVfV \\ 
0@>>>\Z @>>>\widetilde{\PSL}(2;\R)@>>>\PSL(2;\R)@>>> 0.
\end{CD}
\]

Here $\widetilde{\Gamma}(a_1,a_2,a_3)$ has the following presentation:
\[
\langle
x,y,z,h\ |\ h\text{ central}, 
x^{a_1}=h^{-\beta_1},
y^{a_2}=h^{-\beta_2},
z^{a_3}=h^{-\beta_3},
xyz=h^{\beta}
\rangle
\]

The classification of the central extension of $\Gamma(a_1,a_2,a_3)$ is given by 
the euler class $eu(f)$ in 
\[
H^2(\Gamma(a_1,a_2,a_3);\Z) \cong
\langle x_0,x_1,x_2,x_3\ |\ a_1x_1=x_0, a_2x_2=x_0, a_3x_3=x_0 \rangle_{\Z}
\]
such that 
$eu(f)=\beta x_0+\beta_1 x_1+\beta_2 x_2+\beta x_3$.
By the relations, 
one can normalizes any element $\gamma \in H^2(\Gamma(a_1,a_2,a_3);\Z)$
as 
\[
\gamma=\beta x_0+\beta_1 x_1+\beta_2 x_2+\beta_3 x_3
\]
such that $0<\beta_1<a_1, 0<\beta_2<a_2$ and $ 0<\beta_3<a_3$. 

Applying~\cite[Theorem~1]{Jankins-Neumann85} to our case that
$a_1,a_2,a_3$ are pairwise coprime, we have the following theorem.
\begin{theorem}[Jankins-Neumann~\cite{Jankins-Neumann85}]
  \label{thm:JankinsNeumann}
  For any $\gamma=\beta x_0+\beta_1 x_1+\beta_2 x_2+\beta x_3$, 
  it equals $eu(f)$ for some $f:\Gamma(a_1,a_2,a_3)\rightarrow\PSL(2;\R)$ 
  if and only if the following is true;
\item[(a)]
  $\beta=-1, 
  \displaystyle\frac{\beta_1}{a_1}+\frac{\beta_2}{a_2}+\frac{\beta_3}{a_3} < 1$,

  either
\item[(b)]
  $\beta=-2, 
  \displaystyle\frac{\beta_1}{a_1}+\frac{\beta_2}{a_2}+\frac{\beta_3}{a_3} > 2 $.
\end{theorem}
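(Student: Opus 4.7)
The plan is to pin down $eu(f)$ by analyzing lifts $\tilde x,\tilde y,\tilde z\in\widetilde{\PSL}(2;\R)$ of $f(x),f(y),f(z)$. These lifts are determined up to integer shifts by the relations $\tilde x^{a_i}=\sh(-\beta_i)$ and $\tilde x\tilde y\tilde z=\sh(\beta)$; normalizing $0<\beta_i<a_i$ pins down $(\beta,\beta_i)$, and the commutative diagram of central extensions in the preliminaries identifies $eu(f)$ with $\beta x_0+\sum_{i=1}^{3}\beta_i x_i$. The theorem is thus equivalent to classifying which tuples $(\beta;\beta_1,\beta_2,\beta_3)$, with $0<\beta_i<a_i$, actually occur. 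Throughout, set $S=\sum_{i=1}^{3}\beta_i/a_i\in(0,3)$.

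The main analytic tool is the Poincar\'e translation number $\tau\colon\widetilde{\PSL}(2;\R)\to\R$, $\tau(g)=\lim_{n\to\infty}g^n(0)/n$: it is continuous, homogeneous, satisfies $\tau(\sh(n))=n$, and is a quasi-morphism of defect one, $|\tau(gh)-\tau(g)-\tau(h)|<1$. Homogeneity and $\tilde x^{a_1}=\sh(-\beta_1)$ give $\tau(\tilde x)=-\beta_1/a_1\in(-1,0)$, and analogously for $\tilde y,\tilde z$. Applying the quasi-morphism bound twice to $\tilde x\tilde y\tilde z=\sh(\beta)$ yields the preliminary estimate
\[
\left|\,\beta+S\,\right|<2,
\]
which together with $\beta\in\Z$ confines $\beta$ to a short list. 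To refine this into the sharp conditions (a)/(b) I would pass to a geometric viewpoint: the pairwise coprime hypothesis forces the abelianization $\Gamma(a_1,a_2,a_3)^{\mathrm{ab}}$ to be trivial, so any non-trivial $f$ is non-abelian, and (up to conjugation) it is recovered from a hyperbolic triangle in $\mathbb{H}^2$ whose vertex angles are half the rotation angles of $f(x),f(y),f(z)$. Reading the rotation angles as $2\pi\beta_i/a_i$ demands vertex angles summing to $\pi S<\pi$, giving (a) with $\beta=-1$; the alternative reading $2\pi(a_i-\beta_i)/a_i$, obtained via a sign-flip of the lifts (which sends $\beta_i\mapsto a_i-\beta_i$ and $\beta\mapsto -3-\beta$), yields (b) with $\beta=-2$ and $S>2$. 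The intermediate range $S\in[1,2]$ supports no such triangle configuration with either orientation and is therefore excluded.

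For the converse, given $(\beta;\beta_1,\beta_2,\beta_3)$ satisfying (a), I would construct $f$ by taking a hyperbolic triangle in $\mathbb{H}^2$ with angles $\pi\beta_i/a_i$ (which exists precisely because $S<1$), defining $f(x),f(y),f(z)$ as rotations by $2\pi\beta_i/a_i$ about the three vertices, and verifying $f(x)f(y)f(z)=I$ so that $f$ descends to a homomorphism of $\Gamma(a_1,a_2,a_3)$ into $\PSL(2;\R)$; a direct computation of the lifts then recovers $eu(f)=\gamma$. Case (b) follows from case (a) by the sign-flip involution above. The hardest step will be the sharpening that excludes the intermediate $S$-range and the integer values of $\beta\notin\{-1,-2\}$: the quasi-morphism estimate alone is not tight enough, and one must invoke the triangle-rigidity argument in an essential way, using pairwise coprimality to eliminate the abelian degenerations that would otherwise fatten the realizable euler class set.
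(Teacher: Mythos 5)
First, note that the paper does not prove this statement at all: it is imported verbatim as Theorem~1 of Jankins--Neumann, specialized to pairwise coprime $a_1,a_2,a_3$, so there is no internal proof to compare your argument against. Judged on its own, your outline follows the standard route (normalized lifts to $\widetilde{\PSL}(2;\R)$, the translation--number quasimorphism, and the correspondence with hyperbolic triangles), and the ``if'' direction is essentially complete: building $f$ from a triangle with angles $\pi\beta_i/a_i$ when $S<1$, computing the lifted relation to get $\beta=-1$, and transporting case (a) to case (b) by the orientation-reversing involution $\beta_i\mapsto a_i-\beta_i$, $\beta\mapsto-3-\beta$ are all sound (your use of coprimality to kill abelian images, hence to guarantee three distinct elliptic fixed points, is also the right observation).

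The genuine gap is in the ``only if'' direction, and you have flagged it yourself without closing it. The quasimorphism estimate $|\beta+S|<2$ with $S\in(0,3)$ still allows $\beta\in\{-4,-3,-2,-1,0,1\}$ and does not exclude, say, $\beta=-1$ with $1\le S$, or $\beta=-2$ with $S\le 2$. Everything therefore hangs on the assertion that every nontrivial $f$ ``is recovered from a hyperbolic triangle whose vertex angles are half the rotation angles,'' read either as $\pi\beta_i/a_i$ or as $\pi(a_i-\beta_i)/a_i$ coherently in all three vertices, with the value of $\beta$ locked to the choice of reading. That assertion is exactly the content of the theorem and is never derived. To make it a proof you would need, e.g., the reflection decomposition: write $f(x)=\sigma_\ell\sigma_m$, $f(y)=\sigma_m\sigma_n$ with $m$ the geodesic through the two fixed points, observe that ellipticity of $f(z)=\sigma_n\sigma_\ell$ forces $\ell$ and $n$ to meet, analyze how the triangle angles relate to the normalized translation numbers $-\beta_i/a_i$ at each vertex (this is where the $\theta\leftrightarrow\pi-\theta$ ambiguity must be resolved consistently), and then compute $\tau(\tilde x\tilde y\tilde z)$ exactly rather than up to defect. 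Without that computation the intermediate range $S\in[1,2]$ and the stray integer values of $\beta$ are not actually excluded, so the characterization is not established. As written, the proposal is a correct plan with the hardest step outsourced to a claim that is equivalent to the theorem itself.
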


Now assume that such an $f:\Gamma(a_1,a_2,a_3)\rightarrow \PSL(2;\R)$ exists. 
Then there exists 
a central extension 
\[
0\rightarrow\langle h \rangle
\rightarrow 
\widetilde{\Gamma}(a_1,a_2,a_3)
\rightarrow \Gamma(a_1,a_2,a_3)\rightarrow 1
\] 
defined by the pull--back of 
\[
0\rightarrow\Z
\rightarrow \widetilde{\PSL}(2;\R)
\rightarrow \PSL(2;\R)
\rightarrow 1
\]
such that 
$\widetilde{\Gamma}(a_1,a_2,a_3) \ni h\mapsto \sh(1)\in\widetilde{\PSL}(2;\R)$. 

The following holds
from the proof of Theorem~\ref{thm:JankinsNeumann} in~\cite{Jankins-Neumann85}
\begin{proposition}
  $\widetilde{\Gamma}(a_1,a_2,a_3)$ is the fundamental group of 
  the Seifert fibered space with Seifert invariant 
  $\{0;(1,\beta),(a_1,\beta_1),(a_2,\beta_2),(a_3,\beta_3)\}$. 
\end{proposition}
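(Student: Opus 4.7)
The plan is to write down a presentation of the pullback group $\widetilde{\Gamma}(a_1,a_2,a_3)$ using the general correspondence between central extensions by $\Z$ and elements of $H^2$, and then to recognize the resulting presentation as exactly the one recalled in Section~\ref{section:preliminary} for $\pi_1 M(0;(1,\beta),(a_1,\beta_1),(a_2,\beta_2),(a_3,\beta_3))$.

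First I would fix a generator $h$ of $\ker(\widetilde{\Gamma}(a_1,a_2,a_3)\to\Gamma(a_1,a_2,a_3))$, taken to be the preimage of $\sh(1)$; by the pullback construction $h$ is central. Next I choose set-theoretic lifts $\tilde x,\tilde y,\tilde z \in \widetilde{\Gamma}(a_1,a_2,a_3)$ of the generators $x,y,z$ of $\Gamma(a_1,a_2,a_3)$. Since each defining relator of $\Gamma(a_1,a_2,a_3)$ becomes trivial after projection, the elements $\tilde x^{a_1}, \tilde y^{a_2}, \tilde z^{a_3}, \tilde x\tilde y\tilde z$ all lie in $\langle h\rangle$, so they can be written as powers of $h$: there exist integers $n_0,n_1,n_2,n_3$ with $\tilde x^{a_1}=h^{-n_1}$, $\tilde y^{a_2}=h^{-n_2}$, $\tilde z^{a_3}=h^{-n_3}$, and $\tilde x\tilde y\tilde z=h^{n_0}$.

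The key step is to identify the class $n_0 x_0 + n_1 x_1 + n_2 x_2 + n_3 x_3 \in H^2(\Gamma(a_1,a_2,a_3);\Z)$ with the euler class $eu(f)$; this is precisely the cohomological recipe extracted by Jankins--Neumann in their proof of Theorem~\ref{thm:JankinsNeumann}. Replacing $\tilde x$ by $\tilde x h^k$ shifts $n_1$ by $a_1 k$ (and similarly for $\tilde y,\tilde z$), so the tuple $(n_0,n_1,n_2,n_3)$ is only defined modulo the relations $a_i x_i=x_0$ used to present $H^2$. By hypothesis $eu(f)=\beta x_0+\beta_1 x_1+\beta_2 x_2+\beta_3 x_3$, so we can normalize the lifts to arrange $(n_0,n_1,n_2,n_3)=(\beta,\beta_1,\beta_2,\beta_3)$.

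With this choice of lifts the presentation of $\widetilde{\Gamma}(a_1,a_2,a_3)$ reads
\[
\langle x,y,z,h\ |\ h\text{ central},\ x^{a_1}=h^{-\beta_1},\ y^{a_2}=h^{-\beta_2},\ z^{a_3}=h^{-\beta_3},\ xyz=h^{\beta}\rangle,
\]
which is the presentation of $\pi_1 M(0;(1,\beta),(a_1,\beta_1),(a_2,\beta_2),(a_3,\beta_3))$ recalled in Section~\ref{section:preliminary}. The only genuine difficulty is tracking signs: one needs to verify that the convention for the euler class used implicitly in $H^2(\Gamma(a_1,a_2,a_3);\Z)$ and in Jankins--Neumann matches the sign convention for Seifert invariants of Orlik, so that $(n_0,n_1,n_2,n_3)$ lines up with $(\beta,\beta_1,\beta_2,\beta_3)$ rather than with some sign-twisted variant. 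Once those conventions are pinned down the rest is bookkeeping.
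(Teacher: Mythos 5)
Your argument is essentially the one the paper itself relies on: the paper gives no proof of this proposition but simply cites the proof of Theorem~1 of Jankins--Neumann, and what you have written is a faithful reconstruction of that argument (lift the generators, read off the extension cocycle as a tuple $(n_0,n_1,n_2,n_3)$ defined modulo the relations $a_ix_i=x_0$, normalize to match $eu(f)$, and compare with the standard presentation of the Seifert fibered space group recalled in Section~2). One step you pass over too quickly: verifying the relations $\tilde x^{a_1}=h^{-\beta_1}$, \dots, $\tilde x\tilde y\tilde z=h^{\beta}$ in $\widetilde{\Gamma}(a_1,a_2,a_3)$ only produces a surjection from the abstractly presented group $G=\langle x,y,z,h \mid \cdots\rangle$ onto $\widetilde{\Gamma}(a_1,a_2,a_3)$; to conclude that this \emph{is} a presentation you must check the surjection is injective. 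This is routine but should be said: $G$ is itself a central extension of $\Gamma(a_1,a_2,a_3)$ by the cyclic group $\langle h\rangle$ (killing $h$ in the presentation recovers $\Gamma(a_1,a_2,a_3)$), the surjection $G\to\widetilde{\Gamma}(a_1,a_2,a_3)$ carries $\langle h\rangle$ onto the infinite cyclic kernel $\Z=\langle \sh(1)\rangle$, hence restricts to an isomorphism on the kernels, and the five lemma applied to the two extensions finishes the job. Your closing remark about sign conventions is apt; with the paper's conventions (the presentation of $\pi_1 M(0;(1,b),(a_1,b_1),\dots)$ displayed in Section~2 and the normalization $h\mapsto\sh(1)$) the tuples do line up as stated.
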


Hence by composing $\widetilde{\Gamma}(a_1,a_2,a_3)\rightarrow \widetilde{\PSL}(2;\R)$ 
with the projection from 
$\widetilde{\PSL}(2;\R)$ onto $\SL(2;\R)$, 
one has an $\SL(2;\R)$-representation 
\[
\overline{\rho}:\pi_1 \overline{M}=\widetilde{\Gamma}(a_1,a_2,a_3)\rightarrow \SL(2;\R).
\]
\begin{lemma}
  The $\SL(2;\R)$-representation $\overline{\rho}$ is irreducible
  as an $\SL(2;\C)$-representation.
\end{lemma}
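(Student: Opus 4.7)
The plan is to argue by contradiction: assume $\overline{\rho}$ is reducible as an $\SL(2;\C)$-representation and derive a contradiction. Reducibility means that $\overline{\rho}(x), \overline{\rho}(y), \overline{\rho}(z)$ share a common eigenvector $v\in\C^2$. Projecting everything to $\PSL(2;\C)$, this translates to $f(x), f(y), f(z)\in \PSL(2;\R)\subset \PSL(2;\C)$ sharing a common fixed point $[v]\in\C P^1$.

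First I will verify that each of $f(x), f(y), f(z)$ is nontrivial in $\PSL(2;\R)$. Indeed, if, say, $f(x)=1$, then the image of $x$ in $\widetilde{\PSL}(2;\R)$ lies in the kernel $\langle \sh(1)\rangle$, so it equals $\sh(k)$ for some $k\in\Z$. The relation $x^{a_1}=h^{-\beta_1}$ in $\widetilde{\Gamma}(a_1,a_2,a_3)$, together with $h\mapsto\sh(1)$, yields $\sh(a_1 k)=\sh(-\beta_1)$, i.e., $a_1 k+\beta_1=0$, which is impossible since $0<\beta_1<a_1$. Hence $f(x)\ne 1$ and similarly $f(y),f(z)\ne 1$; being nontrivial torsion elements, each is elliptic in $\PSL(2;\R)$.

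Next I observe that an elliptic element of $\PSL(2;\R)$ has no fixed point on $\R P^1=\partial H^2$; its two fixed points in $\C P^1$ form a complex conjugate pair lying in $H^2\cup \overline{H}^2$. Consequently, the common fixed point $[v]$ must lie in $H^2\cup \overline{H}^2$, and $f(x), f(y), f(z)$ share a common center of rotation in $H^2$. Hence the image of $f$ is contained in a conjugate of $\SO(2)\subset \PSL(2;\R)$, and in particular is abelian.

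Finally, the abelianization $\Gamma(a_1,a_2,a_3)^{\mathrm{ab}}$ is trivial: in additive notation the relations $a_1 x=a_2 y=a_3 z=0$ and $x+y+z=0$, combined with the pairwise coprimality of $(a_1,a_2,a_3)$, force $x=y=z=0$. Therefore any abelian homomorphism $f:\Gamma(a_1,a_2,a_3)\to \PSL(2;\R)$ is trivial, contradicting $f(x)\ne 1$ established above. The only delicate point is ruling out a common fixed point on $\R P^1$, handled by the elementary fact that nontrivial torsion elements of $\PSL(2;\R)$ are elliptic and fix only interior points of $H^2$; the rest reduces to the algebraic identity in $\widetilde{\PSL}(2;\R)$ and the direct computation of $\Gamma^{\mathrm{ab}}$.
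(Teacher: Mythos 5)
Your proof is correct, and it takes a genuinely different route from the paper's. The paper argues by conjugating a putative invariant line to make the image upper triangular, reading off the eigenvalues of $\overline{\rho}(x),\overline{\rho}(y),\overline{\rho}(z)$ from the fact that their lifts are conjugate to $\sh(-\beta_i/a_i)$ in $\widetilde{\PSL}(2;\R)$, and then showing $\tr\overline{\rho}(xyz)\neq\pm 2$ — which uses the coprimality together with the inequality $\sum_i\beta_i/a_i<1$ — contradicting $\overline{\rho}(xyz)=(\pm I)^{\beta}$. You instead pass to $\PSL(2;\R)$, observe that $f(x),f(y),f(z)$ are nontrivial torsion (hence elliptic) because $0<\beta_i<a_i$, note that a common fixed point in $\C P^1$ for elliptic real Möbius transformations forces a common rotation center in $H^2$ and hence an abelian image, and then kill $f$ using the triviality of $\Gamma(a_1,a_2,a_3)^{\mathrm{ab}}$ (which indeed follows from pairwise coprimality via the Smith normal form computation you sketch). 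Each step checks out: the nontriviality of $f(x)$ via $a_1k=-\beta_1$ being unsolvable, the conjugate-pair structure of elliptic fixed points, and the identification of the point stabilizer with an abelian conjugate of $\PSO(2)$. What your approach buys is generality and transparency: it never invokes the inequality $\sum_i\beta_i/a_i<1$, so it applies verbatim to euler classes satisfying condition (b) of Theorem 2.6 as well, and it avoids the paper's somewhat terse claim that the product of the diagonal entries cannot be $\pm1$. What the paper's computation buys is that it stays entirely at the level of traces, which is the bookkeeping it reuses later (Lemma 3.9 and the injectivity argument).
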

\begin{proof}
  Suppose that $\overline{\rho}$ were a reducible $\SL(2;\C)$-representation.
  We can move the image of $\pi_1 \overline{M}$ into
  the set of upper triangular matrices by conjugation.
  Under the homomorphism
  $\pi_1 \overline{M} \to \widetilde{\PSL}(2;\R)$, 
  we see that 
  $x^{a_1}$, $y^{a_2}$ and $z^{a_3}$ are sent to 
  $\sh(-\beta_1)$,
  $\sh(-\beta_2)$ and
  $\sh(-\beta_3)$ in $\widetilde{\PSL}(2;\R)$ respectively.
  The images of $x$, $y$ and $z$ are conjugate to
  $\sh(-\beta_1/a_1)$, $\sh(-\beta_2/a_2)$ and $\sh(-\beta_3/a_3)$
  in $\widetilde{\PSL}(2;\R)$.
  The projection from $\widetilde{\PSL}(2;\R)$ onto $\SL(2;\R)$
  sends $\sh(r)$ to a matrix of trace $2 \cos r \pi$.
  We can see that $\tr \overline{\rho}(xyz) \not = \pm 2$
  from that
  $\overline{\rho}(x)$, $\overline{\rho}(y)$,
  $\overline{\rho}(z)$ are upper triangular and that
  $a_1$, $a_2$, $a_3$ are pairwise coprime,
  $0 < \beta_i < a_i$ and $\sum_{i=1}^3 \beta_i / a_i < 1$.
  This is a contradiction to that $\overline{\rho}(xyz) = \overline{\rho}(h)^{\beta} = (-I)^{\beta}$.
\end{proof}

Note that for $\overline{M}=M(0;(1,\beta),(a_1,\beta_1),(a_2,\beta_2),(a_3,\beta_3))$, it holds that
\[
H_1(\overline{M};\Z)\cong\Z/a |e(\overline{M})|
\]
where $e(\overline{M})$ is the euler number of the Seifert fibration.

Since Brieskorn homology 3-sphere 
$\Sigma(a_1, a_2, a_3)$ is a Seifert fibered space
$M(0;(1,b), (a_1, b_1), (a_2, b_2), (a_3, b_3))$,
we can regard 
$\Sigma(a_1, a_2, a_3)$ 
as a universal abelian cover for each $\overline{M}$ with $a |e(\overline{M})| \not =1$.
We denote the induced homomorphism from $\pi_1 \Sigma(a_1, a_2, a_3)$ to 
$\pi_1 \overline{M}$ by $p$.
If an euler class $eu$ satisfies a condition 
in Theorem~\ref{thm:JankinsNeumann}, 
then we have an $\SL(2;\R)$-representation $\overline{\rho}$ of $\pi_1 \overline{M}$.
We also have an $\SL(2;\R)$-representation of $\pi_1 \Sigma(a_1, a_2, a_3)$
given by the pull--back $p^* \overline{\rho}$.

Hereafter we focus on $\SL(2;\R)$-representations $p^* \overline{\rho}$ 
which are induced from 
euler classes $eu$ satisfying that
$\beta=-1$ and $\beta_1 / a_1 + \beta_2 / a_2 + \beta_3 / a_3 < 1$.

\begin{lemma}
  We have a one--to--one correspondence from 
  euler classes satisfying the condition (a) 
  to 
  those satisfying (b) in Theorem~\ref{thm:JankinsNeumann}.
\end{lemma}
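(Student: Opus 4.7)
The plan is to write down an explicit involution on the set of normalized euler classes that swaps the two conditions in Theorem~\ref{thm:JankinsNeumann}. Recall that classes are written in the normal form $\gamma = \beta x_0 + \beta_1 x_1 + \beta_2 x_2 + \beta_3 x_3$ with $0 < \beta_i < a_i$, and condition (a) requires $\beta = -1$ together with $\sum_i \beta_i/a_i < 1$, while (b) requires $\beta = -2$ together with $\sum_i \beta_i/a_i > 2$.

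First I would define a map $\Phi$ from classes satisfying (a) to classes satisfying (b) by
\[
\Phi(-x_0 + \beta_1 x_1 + \beta_2 x_2 + \beta_3 x_3) = -2 x_0 + (a_1 - \beta_1) x_1 + (a_2 - \beta_2) x_2 + (a_3 - \beta_3) x_3.
\]
The values $a_i - \beta_i$ again lie strictly between $0$ and $a_i$, so the image is in the normalized form. The sum check is immediate:
\[
\sum_{i=1}^3 \frac{a_i - \beta_i}{a_i} = 3 - \sum_{i=1}^3 \frac{\beta_i}{a_i} > 3 - 1 = 2,
\]
so the image satisfies condition (b). The same formula in reverse, combined with $\sum_i \beta_i / a_i > 2$ giving $\sum_i (a_i - \beta_i)/a_i < 1$, shows that $\Phi$ is a bijection with inverse given by the analogous substitution, establishing the one-to-one correspondence.

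Conceptually, $\Phi$ is nothing but negation $\gamma \mapsto -\gamma$ in $H^2(\Gamma(a_1, a_2, a_3); \Z)$ followed by normalization: using the relations $a_i x_i = x_0$ one computes
\[
-\gamma = x_0 - \sum_{i=1}^3 \beta_i x_i = x_0 + \sum_{i=1}^3 \bigl((a_i - \beta_i) x_i - x_0\bigr) = -2 x_0 + \sum_{i=1}^3 (a_i - \beta_i) x_i,
\]
so the map is canonical. This observation is worth recording because it explains why the correspondence matches the passage between the two sign conventions for $\PSL(2;\R)$-representations (equivalently, precomposition with an orientation-reversing automorphism of the base orbifold).

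There is no real obstacle: both the verification that $\Phi$ lands in condition (b) and the check that it is an involution are elementary arithmetic. The only subtle point to be careful about is to confirm that the normalization $0 < \beta_i < a_i$ is preserved, i.e.\ that none of the $\beta_i$ can equal $0$ or $a_i$ in either condition, which follows from the strict inequalities $0 < \beta_i < a_i$ required in the normalization of $H^2(\Gamma(a_1,a_2,a_3);\Z)$.
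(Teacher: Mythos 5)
Your proposal is correct and is essentially the paper's argument: the paper realizes the same bijection $(-1,\beta_1,\beta_2,\beta_3)\mapsto(-2,a_1-\beta_1,a_2-\beta_2,a_3-\beta_3)$ by rewriting the Seifert invariant of the orientation-reversed space $-\overline{M}$, which is exactly the negation-plus-normalization of the euler class that you describe. Your version simply makes the arithmetic verification of condition (b) and the involution property more explicit.
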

\begin{proof}
  This is given by an orientation reversing homeomorphism between Seifert fibered spaces. 
  Let $\overline{M}$ be 
  the Seifert fibered space corresponding to an euler class satisfying the condition~(a).
  By changing the Seifert invariant, we can express the Seifert invariant of $-\overline{M}$ as 
  \begin{align*}
    -\overline{M}
    &= -M(0;(1, -1), (a_1, \beta_1), (a_2, \beta_2), (a_3, \beta_3)) \\
    &=  M(0;(1, 1), (a_1, -\beta_1), (a_2, -\beta_2), (a_3, -\beta_3)) \\
    &=  M(0;(1, -2), (a_1, a_1-\beta_1), (a_2, a_2-\beta_2), (a_3, a_3-\beta_3)).
  \end{align*}
  This Seifert invariant satisfies the condition~(b).
\end{proof}

We have an $\SL(2;\R)$-representation of $\pi_1 \Sigma(a_1, a_2, a_3)$
by the pull-back induced from the universal abelian covering between
Seifert fibered spaces.
We write $\rho_{eu}$ for the pull--back of 
an $\SL(2;\R)$-representation of $\pi_1 \overline{M}$
corresponding to the euler class $eu$.
We denote by $\Phi$ the induced correspondence between
euler classes $eu$ and the conjugacy classes of $\rho_{eu}$, i.e., 
\[
\Phi \colon E \to \mathcal{R}_{\SL(2;\R)} (\subset \mathcal{R}_{\SL(2;\C)})
\]
where $E$ is the set of euler classes
$eu \in H^2(\Gamma(a_1,a_2,a_3);\Z)$
given by $eu = \beta x_0 + \beta_1 x_1 + \beta_2 x_2 + \beta_3 x_3$, 
$\beta =-1, 0 < \beta_i < a_i \,(i=1,2,3)$ and
$\sum_{i=1}^3 (\beta_i/a_i) < 1$.

\begin{remark}
  It is obvious that the set $E$ can be identified with $X_0$.
\end{remark}

\section{Main result}
In this section we prove the main theorem.
Here $M$ denotes a Brieskorn homology 3-sphere $\Sigma(a_1,a_2,a_3)$.
\begin{theorem}
  Let $\rho$ be an irreducible $\SL(2;\C)$-representation
  of $\pi_1 M$.
  If $\rho$ is not conjugate to an $\SU(2)$-representation, then
  there exits a unique euler class $eu \in E$ such that
  the induced $\SL(2;\R)$-representation $\rho_{eu}$ is conjugate to $\rho$.
\end{theorem}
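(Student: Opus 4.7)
The plan is to extract from the trace data of $\rho$ a unique triple $(k_1,k_2,k_3) \in X_0$ (identified with $E$) and then identify $\rho$ with $\rho_{eu}$ via a Fricke--type rigidity in the character variety. Since $\rho$ is irreducible and $h$ is central, Schur's lemma forces $\rho(h)=\pm I$; the relation $x^{a_1}=h^{-b_1}$ then yields $\rho(x)^{a_1}=\pm I$, and since $a_1 \ge 2$ rules out nontrivial unipotent behaviour, $\rho(x)$ is elliptic with $\tr \rho(x) = 2\cos(\pi k_1/a_1)$ for a unique $0 < k_1 < a_1$; the same applies to $y$ and $z$, giving integers $(k_1,k_2,k_3)$.

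The key step is a dichotomy: an irreducible triple $(\rho(x),\rho(y),\rho(z))$ of elliptic matrices with product in $\{\pm I\}$ and prescribed traces $2\cos(\pi k_i/a_i)$ is $\SL(2;\C)$-conjugate into $\SU(2)$ precisely when the spherical triangle inequalities on $(k_i/a_i)$ are satisfied, and is otherwise conjugate into $\SL(2;\R)$. Under our hypothesis $(k_1,k_2,k_3)$ fails this spherical condition, and by the previous lemma we may normalize to the case $k_1/a_1+k_2/a_2+k_3/a_3 < 1$, so that $eu := -x_0 + k_1 x_1 + k_2 x_2 + k_3 x_3 \in E$. By the construction in Section~\ref{section:4} and the trace identity $\tr(\sh(r) \bmod \SL(2;\R)) = 2\cos \pi r$, the pulled--back representation $\rho_{eu}$ has the same traces on $x,y,z$ as $\rho$ and satisfies $\rho_{eu}(h) = -I$. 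Since $\pi_1 M$ is two--generated modulo its center (as $z = (xy)^{-1} h^b$), the standard Fricke theorem implies that two irreducible $\SL(2;\C)$-representations agreeing on $(\tr \rho(x), \tr \rho(y), \tr \rho(xy))$ and on the central character are $\SL(2;\C)$-conjugate, whence $\rho \sim \rho_{eu}$. Uniqueness of $eu$ is then immediate, since $(k_1,k_2,k_3)$ is recovered from the traces of $\rho$.

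The main obstacle is justifying the $\SU(2)/\SL(2;\R)$ dichotomy, essentially a classification of irreducible triples of elliptic $2 \times 2$ matrices with product $\pm I$ up to $\SL(2;\C)$-conjugacy, and matching the non--$\SU(2)$ regime with case~(a) of Theorem~\ref{thm:JankinsNeumann}. One also has to verify that in the non--$\SU(2)$ case the sign $\rho(h) = -I$ is forced: if instead $\rho(h) = I$, then $\rho$ factors through $\Gamma(a_1,a_2,a_3)$, and one must check that the resulting representation is $\SU(2)$-conjugate, which follows from an analysis of the relations $\rho(x)^{a_i} = I$ together with the parities of the Seifert invariants $b_i$. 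Once this is in place, the trace comparison completes the proof.
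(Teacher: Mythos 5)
Your proposal takes a genuinely different route from the paper, and as written it has a real gap at its center. The paper does not argue with trace coordinates to establish surjectivity at all: it counts. By Boden--Curtis, $|\mathcal{R}_{\SL(2;\C)}| = (a_1-1)(a_2-1)(a_3-1)/4$, and by Fintushel--Stern this exceeds $|\mathcal{R}_{\SU(2)}|$ by exactly $|X_0| = |E|$. The theorem then reduces to two statements proved separately: no irreducible $\SU(2)$-representation is $\SL(2;\C)$-conjugate to an $\SL(2;\R)$-one (Proposition~\ref{prop:su_sl}), and $\Phi \colon E \to \mathcal{R}_{\SL(2;\R)}$ is injective (Proposition~\ref{prop:injectivity}). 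Surjectivity onto the non-$\SU(2)$ classes --- the part you try to prove directly --- then follows by pigeonhole, with no need to identify which euler class realizes which character.

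The gap in your argument is exactly the step you label ``the main obstacle'': the dichotomy that an irreducible triple of elliptic matrices with product in $\{\pm I\}$ and traces $2\cos(\pi k_i/a_i)$ is conjugate into $\SU(2)$ when a spherical condition holds and into $\SL(2;\R)$ otherwise, with the non-$\SU(2)$ regime matching condition~(a) of Theorem~\ref{thm:JankinsNeumann}. That dichotomy \emph{is} the theorem (it amounts to the Fintushel--Stern rotation-number analysis together with the realization half of Jankins--Neumann), so asserting it leaves nothing proved. Two subsidiary steps also do not go through as stated. The assignment $eu = -x_0 + k_1x_1 + k_2x_2 + k_3x_3$ need not reproduce the traces of $\rho$: by Lemma~\ref{lemma:traces_rho_e}, $\tr \rho_{eu}(x) = (-1)^{q_1}\,2\cos(\beta_1\pi/a_1)$, where the sign $(-1)^{q_1}$ is governed by the arithmetic of $-a|e(\overline{M})|b_1$, so matching the $k_i$ read off from $\rho$ with the correct $\beta_i$ (possibly $a_i-k_i$) is precisely the delicate computation carried out in Lemma~\ref{lemma:traces_rho_e} and Proposition~\ref{prop:injectivity}; relatedly, your normalization ``we may assume $\sum k_i/a_i<1$'' invokes a lemma that flips all three $\beta_i$ simultaneously, whereas each $k_i$ is already pinned down in $(0,a_i)$ by its trace, so failure of the spherical condition does not by itself place $(k_1,k_2,k_3)$ in $X_0$. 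Finally, the case $\rho(h)=I$ is not disposed of by a parity remark: whether an irreducible representation factoring through the triangle group must be $\SU(2)$-conjugate is again a realization/lifting question requiring argument. The Fricke step (irreducible two-generator representations with equal $(\tr X,\tr Y,\tr XY)$ are conjugate) is fine and is the one ingredient the paper also uses implicitly.
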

\begin{proof}
  It is shown in~\cite{Boden-Curtis} that
  \[
  |\mathcal{R}_{\SL(2;\C)}| = \frac{(a_1-1)(a_2-1)(a_3-1)}{4}.
  \]
  Together with the result~\cite{Fintushel-Stern},
  we can see that
  $|\mathcal{R}_{\SL(2;\C)}| - |\mathcal{R}_{\SU(2)}| = |E|$.
  It remains to prove that
  \begin{enumerate}
  \item any irreducible $\SU(2)$-representation can not be moved to $\SL(2;\R)$-one
    under conjugation by a matrix in $\SL(2;\C)$,
  \item
    the map $\Phi$ is one--to--one.
  \end{enumerate}
  We divide these proofs into Proposition~\ref{prop:su_sl} and~\ref{prop:injectivity}.
\end{proof}

\begin{corollary}
  \label{remark:numbers_conj}
  We have that
  $|\mathcal{R}_{\SL(2;\C)}| = |\mathcal{R}_{\SU(2)}| + |\mathcal{R}_{\SL(2;\R)}|$
  by counting the conjugacy classes as $\SL(2;\C)$-representations.
\end{corollary}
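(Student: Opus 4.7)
The plan is to read off the corollary from the main theorem almost immediately, viewing it as a bookkeeping statement that packages the theorem's content as a cardinality identity. The theorem establishes that every irreducible $\SL(2;\C)$-representation of $\pi_1 M$ has a unique representative (up to $\SL(2;\C)$-conjugation) either in $\SU(2)$ or of the form $\rho_{eu}$ for some $eu \in E$. Translating this into $\SL(2;\C)$-conjugacy classes gives the set-level decomposition
\[
\mathcal{R}_{\SL(2;\C)} = \mathcal{R}_{\SU(2)} \cup \Phi(E),
\]
where all three sets are regarded as subsets of the $\SL(2;\C)$-conjugacy classes, as stipulated by the remark in the introduction.

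The first point I would verify is disjointness of this union. For this I would appeal to Proposition~\ref{prop:su_sl} (invoked in the proof of the main theorem), which says that no irreducible $\SU(2)$-representation can be $\SL(2;\C)$-conjugated to an $\SL(2;\R)$-representation. Disjointness then yields
\[
|\mathcal{R}_{\SL(2;\C)}| = |\mathcal{R}_{\SU(2)}| + |\Phi(E)|.
\]

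The second step is to identify $\Phi(E)$ with all of $\mathcal{R}_{\SL(2;\R)}$. The inclusion $\Phi(E) \subset \mathcal{R}_{\SL(2;\R)}$ holds by construction of $\Phi$, and the reverse inclusion is precisely the assertion of the main theorem once Proposition~\ref{prop:su_sl} is used to exclude the $\SU(2)$-alternative: any irreducible $\SL(2;\R)$-representation, viewed as an $\SL(2;\C)$-representation, must be conjugate to some $\rho_{eu}$. Substituting $|\Phi(E)| = |\mathcal{R}_{\SL(2;\R)}|$ into the displayed equation finishes the proof. The only potential pitfall is keeping track of which equivalence relation is being used on each side, since $\SL(2;\R)$-conjugacy could a priori be strictly finer than $\SL(2;\C)$-conjugacy; the remark in the introduction removes this ambiguity by fixing $\SL(2;\C)$-conjugacy throughout. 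Injectivity of $\Phi$ (Proposition~\ref{prop:injectivity}) is not strictly needed for this corollary, though it further refines the identity to $|E| = |\mathcal{R}_{\SL(2;\R)}|$.
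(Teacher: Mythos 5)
Your proposal is correct and matches the paper's (implicit) argument: the corollary is stated without proof as an immediate consequence of the main theorem, whose content is exactly the disjoint decomposition $\mathcal{R}_{\SL(2;\C)} = \mathcal{R}_{\SU(2)} \sqcup \mathcal{R}_{\SL(2;\R)}$ that you spell out, with Proposition~\ref{prop:su_sl} supplying disjointness and the theorem supplying $\mathcal{R}_{\SL(2;\R)} = \Phi(E)$. Your observation that injectivity of $\Phi$ is not needed for this particular cardinality identity (only for the refinement $|\mathcal{R}_{\SL(2;\R)}| = |E|$) is also accurate.
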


\begin{remark}
  It remains the same to count the conjugacy classes of
  irreducible $\SU(2)$-representations 	 
  under the both conjugations by $\SU(2)$ and $\SL(2;\C)$.
  In the case of $\SL(2;\R)$, it is not same.
\end{remark}

\begin{remark}
  There exists an example in which has no irreducible $\SL(2;\R)$-representations.
  It holds for $\Sigma(2, 3, 5)$ that 
  $|\mathcal{R}_{\SL(2;\C)}| = |\mathcal{R}_{\SU(2)}|$.
\end{remark}

The following proposition is folklore. 
\begin{proposition}
  \label{prop:su_sl}
  Let $\rho:\pi_1\Sigma(a_1,a_2,a_3) \rightarrow \SU(2)$ be an irreducible representation. 
  If $\rho$ is conjugate to an $\SL(2;\R)$-representation by a matrix in $\SL(2;\C)$,
  then $\rho$ is abelian.
\end{proposition}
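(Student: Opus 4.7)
The plan is to use the fact that $\SU(2)$ is compact while $\SL(2;\R)$ has abelian maximal compact subgroups. Suppose $\rho$ lands in $\SU(2)$ and there exists $A\in\SL(2;\C)$ with $A\rho A^{-1}$ taking values in $\SL(2;\R)$. Set $\rho' = A\rho A^{-1}$, and note that the image of $\rho'$ is contained in
\[
A\,\SU(2)\,A^{-1} \cap \SL(2;\R).
\]
The first factor is a compact subgroup of $\SL(2;\C)$, being a conjugate of $\SU(2)$. The intersection is therefore a closed subgroup of a compact group, hence compact, and it sits inside $\SL(2;\R)$.

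Next, I would invoke the classical fact that every compact subgroup of $\SL(2;\R)$ is contained in some maximal compact subgroup, and every such maximal compact subgroup is conjugate (inside $\SL(2;\R)$) to $\SO(2)$. Consequently there is some $B\in\SL(2;\R)$ so that $B\rho' B^{-1}$ lies in $\SO(2)$, which is abelian. Since being abelian is invariant under conjugation, the image of $\rho$ itself is abelian.

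A quick sanity check makes the proposition meaningful in context: any abelian subgroup of $\SU(2)$ lies in a maximal torus and therefore has a common $\C$-eigenvector, so an abelian $\SU(2)$-representation is reducible over $\C$. Combined with the proposition this means no irreducible $\SU(2)$-representation can be $\SL(2;\C)$-conjugate to an $\SL(2;\R)$-one, which is exactly what is needed in the main theorem.

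There is essentially no serious obstacle; the only nontrivial input is the structural statement that compact subgroups of $\SL(2;\R)$ are conjugate into $\SO(2)$, which is standard (e.g.\ from the $KAN$/Iwasawa decomposition, or equivalently from the existence of a fixed point for a compact group action on the upper half plane). The rest is just tracking the conjugation by $A$ through the inclusion.
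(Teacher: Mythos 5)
Your argument is correct, but it takes a genuinely different route from the paper's. You observe that the image of $\rho' = A\rho A^{-1}$ lies in $A\,\SU(2)\,A^{-1}\cap \SL(2;\R)$, which is a compact subgroup of $\SL(2;\R)$, and then invoke the structure theory: every compact subgroup of $\SL(2;\R)$ is contained in a maximal compact subgroup, all of which are conjugate to $\SO(2)$ and hence abelian. (This is where all the content sits; it follows from the Cartan fixed point theorem for the action on the upper half plane, or from the Iwasawa decomposition, as you note.) The paper instead proves a pairwise statement by hand: given $A,B\in\SU(2)$ with $PAP^{-1},PBP^{-1}\in\SL(2;\R)$, it diagonalizes $A$, tracks the fixed points of the associated M\"obius transformations, shows that $T_P$ must carry the real line to the unit circle, and deduces that $T_B$ preserves the unit circle and is therefore also diagonal; commutativity follows. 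Your version is shorter, more conceptual, and visibly generalizes (it applies verbatim to any representation of any group into $\SU(2)$, which is exactly the generalization the paper records in a remark afterwards), at the cost of citing the classification of maximal compact subgroups of $\SL(2;\R)$ as a black box; the paper's version is self-contained and elementary but tied to an explicit two-element computation. Your closing sanity check --- that an abelian subgroup of $\SU(2)$ consists of simultaneously diagonalizable matrices and so gives a reducible $\SL(2;\C)$-representation --- is also the right way to see how the proposition feeds into the main theorem.
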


Proposition~\ref{prop:su_sl} follows from the next lemma.
\begin{lemma}
  Let $A, B \in \SU(2)$. 
  If there exists $P\in\SL(2;\C)$ such that $PAP^{-1}, PBP^{-1} \in \SL(2;\R)$, 
  then $AB=BA$. 
\end{lemma}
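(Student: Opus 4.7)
The plan is to argue by contradiction: assume $AB \neq BA$ and extract an impossibility from the coexistence of the $\SU(2)$- and $\SL(2;\R)$-structures. The starting observation is the standard fact that any non-abelian subgroup of $\SU(2)$ acts irreducibly on $\C^{2}$ -- a reducible unitary subgroup preserves both a line and its orthogonal complement, hence is simultaneously diagonalizable and abelian. Consequently $\langle A,B\rangle$ acts irreducibly on $\C^{2}$, and so does its $P$-conjugate $\langle A',B'\rangle$, where $A'=PAP^{-1}$ and $B'=PBP^{-1}$.

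Next I would encode each real structure as a matrix identity. Using the symplectic relation $g^{-T}=JgJ^{-1}$ valid for every $g\in\SL(2;\C)$, with $J=\bigl(\begin{smallmatrix}0 & 1\\-1 & 0\end{smallmatrix}\bigr)$, the $\SU(2)$-condition $A^{*}=A^{-1}$ rewrites as $\bar A=A^{-T}=JAJ^{-1}$, and similarly for $B$. The $\SL(2;\R)$-condition $\overline{A'}=A'$ rearranges, after conjugation by $P$, to $Q\bar A Q^{-1}=A$ with $Q:=P^{-1}\bar P$; and again the same holds for $B$. Combining these identities yields $(QJ)A(QJ)^{-1}=A$ and $(QJ)B(QJ)^{-1}=B$. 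By Schur's lemma applied to the irreducible subgroup $\langle A,B\rangle$, the matrix $QJ$ must be a scalar, say $QJ=\lambda I$ for some $\lambda\in\C^{\times}$, hence $Q=\lambda J^{-1}=-\lambda J$.

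The contradiction falls out by computing $Q\bar Q$ in two ways. Directly from $Q=P^{-1}\bar P$ one gets $Q\bar Q=P^{-1}\bar P\cdot\bar P^{-1}P=I$. On the other hand, $Q=-\lambda J$ together with $\bar J=J$ and $J^{2}=-I$ gives $Q\bar Q=|\lambda|^{2}J^{2}=-|\lambda|^{2}I$. These force $|\lambda|^{2}=-1$, which is absurd; hence $AB=BA$.

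The main insight, and the only genuinely nontrivial step, is recognizing how the two real forms $\SU(2)$ and $\SL(2;\R)$ interact through the symplectic form encoded by $J$: the $\SU(2)$-condition carries with it a Hermitian positivity (manifest in the identity $Q\bar Q=I$), while the representation-theoretic constraint forces $Q$ to act as a scalar times $J$, whose symplectic structure flips the sign. Conceptually this is the Frobenius--Schur dichotomy: the defining 2-dimensional representation of $\SU(2)$ is of quaternionic type, so it cannot also be realized irreducibly over $\R$.
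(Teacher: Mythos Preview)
Your argument is correct and is a genuinely different proof from the paper's. The paper works geometrically via M\"obius transformations: after diagonalising $A$ it tracks fixed points on the Riemann sphere, observes that $T_P$ carries the real axis to the unit circle, and then forces $B$ to preserve that circle, which in $\SU(2)$ means $B$ is diagonal too. Your approach is purely algebraic: you encode the two real structures as $\bar A = JAJ^{-1}$ (unitarity plus the symplectic cofactor identity) and $Q\bar A Q^{-1}=A$ with $Q=P^{-1}\bar P$ (reality of the conjugate), combine them so that $QJ$ centralises the irreducible group $\langle A,B\rangle$, invoke Schur, and derive a sign clash between $Q\bar Q = I$ and $Q\bar Q = -|\lambda|^2 I$. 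Conceptually this is exactly the Frobenius--Schur obstruction you name at the end: the standard $\SU(2)$-module is quaternionic, not real. Your proof is shorter, coordinate-free, and makes the underlying representation-theoretic reason transparent; the paper's proof is more hands-on and perhaps more accessible to a reader thinking in terms of hyperbolic geometry and the disk model. Both are complete.
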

\begin{proof}
  We prove that  if 
  $P^{-1}AP$ and $P^{-1}BP$ are matrices in $\SL(2;\R)$, then
  $AB=BA$. It means that $A$ and $B$ are contained in the same maximal abelian subgroup in $\SU(2)$. 
  It is known that every matrix in $\SU(2)$ can be diagonalizable by a matrix in $\SU(2)$.
  We can assume $A$ is a diagonal matrix.
  Suppose that $P^{-1}AP, P^{-1}BP \in \SL(2;\R)$.
  We write $T_A$ to the linear fraction transform by $A$ and so on.
  It is known that $T_{P^{-1}AP}$ has the fixed points given by $a$ and $\bar a$ (the complex conjugate of $a$).
  The fixed points of $T_A$ are $0$ and $\infty$.
  Since $T_P$ sends $a$ and $\bar a$ to $0$ and $\infty$ respectively,
  we can express $T_P$ as
  \[T_P = e^{i\theta}\, \frac{z-a}{z-\bar a} \quad (\theta \in \R).\]
  It is known that this $T_P$ maps the real line to the unit circle in the complex plane.
  Together with that the real linear fractional transform $T_{P^{-1}BP}$ maps the real line onto itself,
  we can see that $T_{B} = T_{P} \circ T_{P^{-1}BP} \circ T_{P}^{-1}$ maps the unit circle onto itself.
  Hence $T_B$ is expressed as
  \[T_B = e^{i\varphi}\, \frac{z-\zeta}{-\bar{\zeta} z +1} \quad (\varphi \in \R).\]
  On the other hand, we can express $T_B$ as
  \[T_B = \frac{\xi z + \eta}{-\bar{\eta} z + \bar{\xi}}, \quad
    |\xi|^2 + |\eta|^2 =1 \]
  since $B$ is an element in $\SU(2)$.
  Since $\eta$ and $\zeta$ must be zero, $B$ is also diagonal.
  This means that $A$ and $B$ are contained in the same maximal abelian subgroup in $\SU(2)$.
\end{proof}
\begin{remark}
  We have shown a more general situation.
  Let $F$ be a finitely generated group 
  and $\rho:F \rightarrow \SU(2)$ a representation. 
  If $\rho$ is conjugate to an $\SL(2;\R)$-representation by a matrix in $\SL(2;\C)$, 
  then $\rho$ is abelian. 
\end{remark}

Under an irreducible representation
$\rho:\pi_1\Sigma(a_1,a_2,a_3)\rightarrow \SL(2;\C)$,
we write the capital letter $X$ for $\rho(x)$.
Any conjugacy class in $\mathcal{R}_{\SL(2;\C)}$ 
can be parametrized by the triple of traces 
$(\tr X, \tr Y, \tr XY)$. 
This triple is determined under a choice of integers $b$ and $b_i$ in the Seifert invariant.
\begin{remark}
  These integers $b,b_1,b_2,b_3$ are not unique for given $a_1,a_2,a_3$. 
  As in~\cite{Boden-Curtis}, we can choose $b=0$.
  We will require this assumption in the proof of our main result.
\end{remark}

We may assume $b_2$ is even because $a_2$ is odd. 
If $b_2$ is odd, 
then we can replace 
$b_1$ and $b_2$
with 
$b_1+a_1$ and $b_2-a_2$ respectively. 
We will show that $\Phi$ is a one--to--one correspondence between the euler class
and the triple of traces. 
First we assume that integers $b$ and $b_i$ in the Seifert invariant 
of $\Sigma(a_1, a_2, a_3)$ satisfy that 
$b=0$ and $b_2$, $b_3$ are even.
Under this assumption, 
the presentation of $\pi_1 \Sigma(a_1, a_2, a_3)$ turns into 
\[
\langle
x,y,z,h\ |\ h\text{ central}, x^{a_1}=h^{-b_1},y^{a_2}=h^{-b_2},z^{a_3}=h^{-b_3},xyz=1
\rangle.
\]
The trace $\tr Z$ equals to $\tr (XY)^{-1} = \tr XY$.
The triple $(\tr X,\tr Y, \tr XY)$ coincides with
$(\tr X,\tr Y, \tr Z)$.

The following two lemmas give a necessary condition for  
$(\tr X,\tr Y, \tr Z)$ to coincide with $(\tr X',\tr Y', \tr Z')$.
We set integers $q_i$ and $r_i$ as 
\[-a|e(\overline{M})|b_i = a_i q_i + r_i\quad (0 < r_i <a_i).\]
\begin{lemma}
  \label{lemma:traces_rho_e}
  The trace of $X$ under $\rho_{eu}$ is expressed as
  \[
  \tr X = (-1)^{q_1} 2 \cos \frac{\beta_1}{a_1} \pi
  \]
  Similarly, the traces of $Y$ and $Z$ are also expressed as
  \[
  \tr Y = (-1)^{q_2} 2 \cos \frac{\beta_2}{a_2} \pi,
  \quad
    \tr Z = (-1)^{q_3} 2 \cos \frac{\beta_3}{a_3} \pi.
  \]
\end{lemma}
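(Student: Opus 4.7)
The plan is to factor $\rho_{eu} = p^{*}\overline{\rho}$ through the chain
\[
\pi_1 \Sigma(a_1,a_2,a_3) \xrightarrow{p} \pi_1 \overline{M} \longrightarrow \widetilde{\PSL}(2;\R) \longrightarrow \SL(2;\R)
\]
and compute the image of $x$ directly. The first task is to identify the image of the central element $h$. Since $\Sigma \to \overline{M}$ is the universal abelian cover of degree $n := a|e(\overline{M})|$ and is fiber--preserving over the common base orbifold, the regular fiber of $\Sigma$ is an $n$--fold cover of the regular fiber of $\overline{M}$; this forces $p(h) = h_0^{n}$, where $h_0$ denotes the fiber class of $\pi_1 \overline{M}$. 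Composing with $h_0 \mapsto \sh(1)$, I obtain that $h$ maps to $\sh(n)$ in $\widetilde{\PSL}(2;\R)$.

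Next, let $\tilde X \in \widetilde{\PSL}(2;\R)$ be the image of $x$. The relation $x^{a_1} = h^{-b_1}$ yields $\tilde X^{a_1} = \sh(-n b_1)$, which is central. Because a non--elliptic element of $\widetilde{\PSL}(2;\R)$ cannot have a central power (its projection to $\PSL(2;\R)$ would have infinite order), $\tilde X$ must lift an elliptic element of $\PSL(2;\R)$ and is therefore conjugate to $\sh(s)$ for some $s \in \R$. The equation $\sh(s a_1) = \sh(-n b_1)$ pins down $s = -n b_1/a_1$. Writing $-n b_1 = a_1 q_1 + r_1$ with $0 < r_1 < a_1$ and using that the projection $\widetilde{\PSL}(2;\R) \to \SL(2;\R)$ sends $\sh(r)$ to a matrix of trace $2\cos(r\pi)$, I obtain
\[
\tr X = 2\cos\!\left(\left(q_1 + \tfrac{r_1}{a_1}\right)\pi\right) = (-1)^{q_1}\, 2\cos\!\left(\tfrac{r_1\pi}{a_1}\right).
\]

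The remaining task is the arithmetic identification $r_1 = \beta_1$. Expanding $n = a_1 a_2 a_3 - (\beta_1 a_2 a_3 + \beta_2 a_1 a_3 + \beta_3 a_1 a_2)$ gives $n \equiv -\beta_1 a_2 a_3 \pmod{a_1}$. Reducing the homology sphere condition $a_1 a_2 a_3 b + b_1 a_2 a_3 + b_2 a_1 a_3 + b_3 a_1 a_2 = 1$ modulo $a_1$ yields $b_1 a_2 a_3 \equiv 1 \pmod{a_1}$, so $-n b_1 \equiv \beta_1 \pmod{a_1}$. Since both $r_1, \beta_1$ lie in the open interval $(0, a_1)$, this congruence forces $r_1 = \beta_1$, giving the claimed formula. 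The identities for $\tr Y$ and $\tr Z$ follow by identical computations with the subscripts permuted and the relations $y^{a_2} = h^{-b_2}$, $z^{a_3} = h^{-b_3}$ in place of $x^{a_1} = h^{-b_1}$.

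I expect the main obstacle to be the topological identification $p(h) = h_0^{n}$ with the correct sign and exponent. This requires unpacking the universal abelian covering $\Sigma \to \overline{M}$ at the level of Seifert fibrations and verifying that it restricts to an $n$--fold connected cover of each regular fiber while being trivial on the base orbifold; the sign conventions must match the defining equation of the $q_i$ and $r_i$, since an off--by--sign error here would shift $q_1$ by one and invert the parity $(-1)^{q_1}$. Once this identification is secured, the rest of the argument is a straightforward chase through the central extension together with the elementary congruence above.
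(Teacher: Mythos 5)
Your proof is correct and follows essentially the same route as the paper's: identify $p(h)$ with $\sh(a|e(\overline{M})|)$ via the covering degree, deduce from $x^{a_1}=h^{-b_1}$ that $\tilde{x}$ is conjugate to $\sh(-a|e(\overline{M})|b_1/a_1)$, and then use $a_2a_3b_1\equiv 1 \pmod{a_1}$ together with $a|e(\overline{M})|\equiv -\beta_1 a_2 a_3 \pmod{a_1}$ to identify $r_1=\beta_1$. The only difference is that you spell out why $\tilde{x}$ must be (a lift of) an elliptic element and hence conjugate to some $\sh(s)$, a point the paper leaves implicit.
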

\begin{proof}
  By definition, the $\SL(2;\R)$-representation $\rho_{eu} = p^* \overline{\rho}$
  factors through
  $\pi_1 \Sigma(a_1, a_2, a_3) \to \pi_1 \overline{M} \to \widetilde{\PSL}(2;\R).$
  We denote the images in $\widetilde{\PSL}(2;\R)$ of
  the generators $x$, $y$, $z \in \pi_1 \Sigma(a_1, a_2, a_3)$
  by $\tilde{x}$, $\tilde{y}$, $\tilde{z}$ respectively.
  Since the order $|H(\overline{M};\Z)|$ is $a|e(\overline{M})|$, the covering degree
  between $\Sigma(a_1, a_2, a_3)$ and $\overline{M}$ also equals to $a|e(\overline{M})|$.
  The regular fiber $h \in \pi_1 \Sigma(a_1, a_2, a_3)$ is sent to
  $\sh(a|e(\overline{M})|)$ in $\widetilde{\PSL}(2;\R)$.
  By the relation $x^{a_1} = h^{-b_1}$, we have the relation
  $\tilde{x}^{a_1} = \sh(-a|e(\overline{M})|b_1)$ in $\widetilde{\PSL}(2;\R)$.
  Hence $\tilde{x}$ is conjugate to $\sh(-a|e(\overline{M})|b_1 / a_1)$.

  Since 
  the image of $\sh(r)$ has trace $2 \cos r \pi$
  under the projection onto $\SL(2;\R)$,
  we have that
  \begin{equation}
    \label{eqn:trace_X}\tag{$\ast$}
    \tr X
    = 2 \cos \frac{-a|e(\overline{M})|b_1}{a_1} \pi
    = (-1)^{q_1} 2 \cos \frac{r_1}{a_1} \pi
  \end{equation}
  where $-a|e(\overline{M})|b_1 = a_1 q_1 + r_1\, (0 < r_1 < a_1)$.
  
  Finally we recall the relation that $a(b + \sum_{i=1}^3 b_i/a_i) = 1$.
  This relation implies that 
  $a_2 a_3 b_1 \equiv 1 \pmod{a_1}$.
  Therefore we have that
  $-a|e(\overline{M})|b_1 \equiv \beta_1a_2a_3b_1 \equiv \beta_1 \pmod{a_1}$.
  Together with \eqref{eqn:trace_X}, we obtain our claim.  
\end{proof}

\begin{lemma}
  \label{lemma:necessary_condition}
  If $\rho_{eu}$ is conjugate to $\rho_{eu'}$, then
  $\beta_i = \beta'_i$ for all $i$ or
  there is a unique $i$ such that $\beta'_i = a_i - \beta_i$ and
  $\beta_j = \beta'_j$ for $j \not = i$.
\end{lemma}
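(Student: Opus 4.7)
The plan is to exploit the classical fact that an irreducible $\SL(2;\C)$-representation of $\pi_1 \Sigma(a_1,a_2,a_3)$ is determined, up to $\SL(2;\C)$-conjugacy, by the trace triple $(\tr X, \tr Y, \tr XY)$. Under the standing assumption $b=0$, the relation $xyz=1$ yields $Z=(XY)^{-1}$ and hence $\tr XY = \tr Z$; the central element $h$ contributes nothing since $\rho(h)=\pm I$ is forced and, as a straightforward Bezout argument using the identity $a_2 a_3 b_1 + a_1 a_3 b_2 + a_1 a_2 b_3 + ab = 1$ shows, $\rho(h)$ is in fact determined by $\rho(x)$ and $\rho(y)$. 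Thus, if $\rho_{eu}$ is conjugate to $\rho_{eu'}$, the triples $(\tr X,\tr Y,\tr Z)$ computed from the two representations must agree.

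I would then apply Lemma~\ref{lemma:traces_rho_e} to each representation and equate, for $i=1,2,3$,
\[
(-1)^{q_i}\cos\frac{\beta_i}{a_i}\pi = (-1)^{q'_i}\cos\frac{\beta'_i}{a_i}\pi,
\]
where $q_i$ and $q'_i$ are determined by $b_i$ together with $|e(\overline{M})|$ and $|e(\overline{M}')|$. Since $\beta_i/a_i$ and $\beta'_i/a_i$ both lie in $(0,1)$ and $\cos(\theta\pi)$ is strictly monotone on $(0,1)$, the relation $\cos(\beta'_i/a_i)\pi = \pm\cos(\beta_i/a_i)\pi$ forces $\beta'_i \in \{\beta_i,\, a_i-\beta_i\}$. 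The sign factors $(-1)^{q_i}$ and $(-1)^{q'_i}$ cannot be compared directly (they depend on the euler class, which varies), but this sign ambiguity is precisely absorbed by the two preimages of $\cos$.

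It remains to show that the set $S := \{\, i \mid \beta'_i = a_i-\beta_i \neq \beta_i \,\}$ has at most one element. A direct computation gives
\[
\sum_{i=1}^{3}\frac{\beta'_i}{a_i} = |S| - \sum_{i\in S}\frac{\beta_i}{a_i} + \sum_{i\notin S}\frac{\beta_i}{a_i}.
\]
If $|S|=3$ then this sum equals $3 - \sum_i \beta_i/a_i > 2$, contradicting $\sum_i \beta'_i/a_i < 1$. If $|S|=2$, say $S=\{i,j\}$ with complementary index $k$, then $\sum_\ell \beta'_\ell/a_\ell < 1$ rewrites as $\beta_i/a_i + \beta_j/a_j > 1+\beta_k/a_k > 1$, contradicting $\sum_\ell \beta_\ell/a_\ell < 1$. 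Hence $|S|\le 1$, which is exactly the statement.

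I do not anticipate a serious obstacle; the argument is an elementary consequence of Lemma~\ref{lemma:traces_rho_e} together with the defining inequality $\sum \beta_i/a_i < 1$ for $E$. The only subtlety worth flagging in the write-up is the treatment of the signs $(-1)^{q_i}$: since $q_i$ depends on $|e(\overline{M})|$, these signs do not cancel across the two sides, but the two-valued preimage of $\cos$ on $(0,1)$ turns this potential obstruction into exactly the dichotomy $\beta'_i \in \{\beta_i, a_i-\beta_i\}$ asserted by the lemma.
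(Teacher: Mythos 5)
Your argument is correct and follows essentially the same route as the paper: extract $\beta'_i \in \{\beta_i,\, a_i-\beta_i\}$ from Lemma~\ref{lemma:traces_rho_e} (with the sign ambiguity absorbed into the two cosine preimages on $(0,1)$), then use the inequalities $\sum_i \beta_i/a_i < 1$ and $\sum_i \beta'_i/a_i < 1$ to rule out two or three flipped indices. Your write-up merely makes explicit the counting step that the paper states in one sentence; the opening digression about the trace triple determining the conjugacy class is unnecessary (only the trivial direction, that conjugate representations have equal traces, is needed) but harmless.
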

\begin{proof}
  By Lemma~\ref{lemma:traces_rho_e}, the equality $\tr X = \tr X'$,
  $\tr Y = \tr Y'$ and $\tr Z = \tr Z'$
  give the constraints that 
  $\beta_i = \beta'_i$ or $\beta'_i = a_i - \beta_i$ for each $i$.
  From another constraints that
  $\beta_1 / a_1 + \beta_2 / a_2 + \beta_3 / a_3 < 1$ and 
  $\beta'_1 / a_1 + \beta'_2 / a_2 + \beta'_3 / a_3 < 1$,
  we must have only one different $\beta'_i$ from $\beta_i$ if there exists.
\end{proof}

\begin{remark}
  \label{remark:necessary_condition}
  The conclusions in Lemma~\ref{lemma:necessary_condition} are not exclusive.
  When $a_1$ equals to $2\beta_1$, the case of $i=1$ in the latter conclusion
  coincides with the former one. 
\end{remark}

\begin{proposition}
  \label{prop:injectivity}
  The map $\Phi: E \to \mathcal{R}_{\SL(2;\R)}$ is one--to--one.
\end{proposition}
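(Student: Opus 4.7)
The plan is to combine Lemma~\ref{lemma:necessary_condition} with a parity analysis of the integers $q_j$ appearing in Lemma~\ref{lemma:traces_rho_e}. Suppose $\Phi(eu) = \Phi(eu')$, so that $\rho_{eu}$ and $\rho_{eu'}$ are $\SL(2;\C)$-conjugate. By Lemma~\ref{lemma:necessary_condition}, either $eu = eu'$ (and we are done) or there is a unique index $i \in \{1,2,3\}$ with $\beta'_i = a_i - \beta_i$ and $\beta'_j = \beta_j$ for $j \neq i$. I will show the second alternative still forces $eu = eu'$.

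First I fix the normalization. Since $a_3$ is odd, the replacement trick used in the text for $b_2$ (sending $(b_1, b_3)$ to $(b_1 + a_1, b_3 - a_3)$) also lets me assume $b_3$ is even. Reducing the identity $a_2 a_3 b_1 + a_1 a_3 b_2 + a_1 a_2 b_3 = 1$ modulo $2$ then forces $b_1$ to be odd. Using $\beta = -1$ together with condition~(a) of Theorem~\ref{thm:JankinsNeumann} (which guarantees $e(\overline{M}) > 0$, and similarly for $e(\overline{M}')$ provided $eu' \in E$), a direct computation yields
\[
a(e(\overline{M}) - e(\overline{M}')) =
\begin{cases} a_2 a_3 (a_1 - 2\beta_1), & i = 1, \\
a_1 a_3 (a_2 - 2\beta_2), & i = 2, \\
a_1 a_2 (a_3 - 2\beta_3), & i = 3. \end{cases}
\]
Subtracting the defining relations $-a|e(\overline{M})| b_j = a_j q_j + r_j$ and $-a|e(\overline{M}')| b_j = a_j q'_j + r'_j$, and using $r_j = r'_j = \beta_j$ whenever $j \neq i$, expresses $q_j - q'_j$ explicitly in each case.

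For $i = 1$ I find $q_1 - q'_1 = (2\beta_1 - a_1)(a_2 a_3 b_1 - 1)/a_1$, an integer by the congruence $a_2 a_3 b_1 \equiv 1 \pmod{a_1}$ from Lemma~\ref{lemma:traces_rho_e}. This integer is even: if $a_1$ is odd then $(a_2 a_3 b_1 - 1)/a_1$ is even because $a_2 a_3 b_1$ is odd, while if $a_1$ is even then $2\beta_1 - a_1$ is even. For $i = 2$, $q_1 - q'_1 = -a_3(a_2 - 2\beta_2) b_1$ is a product of three odd integers, hence odd; the case $i = 3$ is symmetric. Combining these parities with $\cos((a_i - \beta_i)\pi/a_i) = -\cos(\beta_i \pi/a_i)$ yields $\tr X' = -\tr X$ in every case: for $i = 1$ the cosine factor of $\tr X$ flips sign while $(-1)^{q_1}$ is preserved, whereas for $i = 2, 3$ the cosine factor is preserved while $(-1)^{q_1}$ flips.

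Conjugacy forces $\tr X = \tr X'$, hence $\tr X = 0$ and $\beta_1 = a_1/2$ (so $a_1$ is even). This settles $i = 1$, where then $\beta'_1 = \beta_1$ and $eu = eu'$. For $i = 2$, requiring both $eu$ and $eu'$ to satisfy condition~(a) gives the simultaneous inequalities $\beta_1/a_1 + \beta_3/a_3 < 1 - \beta_2/a_2$ and $\beta_1/a_1 + \beta_3/a_3 < \beta_2/a_2$, whence $\beta_1/a_1 + \beta_3/a_3 < \min(\beta_2/a_2, 1 - \beta_2/a_2) \le 1/2$, contradicting $\beta_1/a_1 = 1/2$ together with $\beta_3 > 0$; the case $i = 3$ is symmetric. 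This completes the proof. The main obstacle is the parity bookkeeping: one must carefully separate the sign contribution from the cosine factor (which flips only when $i$ matches the index of the trace being examined) from that of $(-1)^{q_j}$ (which depends on all three $\beta_k$ through $e(\overline{M})$).
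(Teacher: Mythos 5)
Your proposal is correct and follows essentially the same strategy as the paper's proof: reduce to the three cases supplied by Lemma~\ref{lemma:necessary_condition}, show that in each case $\tr X' = -\tr X$ (the paper does this by expanding the angle $-a|e(\overline{M})|b_1\pi/a_1$ and using that $b_1a_2a_3$ is odd, you by tracking the parity of $q_1-q_1'$ — the same computation in different packaging), conclude $\tr X=0$ and $a_1=2\beta_1$, and exclude $i=2,3$ via the constraints $\sum_i\beta_i/a_i<1$ and $\sum_i\beta'_i/a_i<1$. Your write-up has the minor merit of carrying out explicitly the $i=2,3$ cases that the paper dismisses as ``similar arguments.''
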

\begin{proof}
  We prove that if the triple $(\tr X,\tr Y, \tr Z)$ 
  equals to $(\tr X',\tr Y', \tr Z')$, then $e = e'$.
  From Lemma~\ref{lemma:necessary_condition} and
  Remark~\ref{remark:necessary_condition}, it remains to show that 
  if there exists a unique $i$ such that $\beta'_i = a_i - \beta_i$,
  then $a_i = 2\beta_i$. Consequently, $e$ coincides with $e'$.
  We will show that $a_1 = 2\beta_1$ under our assumption that
  $b=0$, $a_2$, $a_3$ are odd and $b_2$, $b_3$ are even.
  Suppose that $\beta'_1 = \beta_1 - a_1$.
  By the equation~\eqref{eqn:trace_X}, we can express $\tr X$ as
  \begin{align*}
    \tr X
    &= 2 \cos \frac{-a|e(\overline{M})|b_1}{a_1} \pi\\
    &= 2 \cos \left(-b_1a_2a_3 + \frac{\beta_1 b_1 a_2 a_3}{a_1}
    + b_1 \beta_2 a_3 + b_1 a_2 \beta_3 \right) \pi \\
    &= -2 \cos \left(\frac{\beta_1 b_1 a_2 a_3}{a_1}
    + b_1 \beta_2 a_3 + b_1 a_2 \beta_3 \right) \pi,
  \end{align*}
  which is due to that $b_1 a_2 a_3$ is odd.
  On the other hands, we can also express $\tr X'$ as
  \begin{align*}
    \tr X'
    &= 2 \cos \left( -\frac{\beta_1 b_1 a_2 a_3}{a_1}
    + b_1 \beta_2 a_3+ b_1a_2\beta_3\right)\pi \\
    &= 2 \cos \left( -\frac{\beta_1 b_1 a_2 a_3}{a_1}
    - b_1 \beta_2 a_3 - b_1a_2\beta_3\right)\pi.
  \end{align*}
  Since we assume that $\tr X = \tr X'$,
  we have $\tr X = 0$ which shows that $a_1 = 2\beta_1$
  from Lemma~\ref{lemma:traces_rho_e}.
  The same conclusion $a_1 = 2\beta_1$ can be drawn for 
  the cases that $\beta_i' = a_i - \beta_i$ for $i=2, 3$
  by similar arguments.
  These cases are excluded by 
    the constraints that $\sum_i \beta_i / a_i < 1$ and $\sum_i \beta'_i / a_i < 1$.
\end{proof}

We can use the set of euler classes satisfying the condition (b) in
Theorem~\ref{thm:JankinsNeumann}.
This is due to that 
the Seifert fibered space $-\overline{M}$ also induces the same triple of traces
as that for $\overline{M}$. 
We can express an isomorphism
$\varphi:\pi_1 \overline{M} \to \pi_1 (-\overline{M})$
as
\[\varphi: h \mapsto h'^{-1}, x \mapsto x'h', y \mapsto y'h', z \mapsto z'h'.\]
Let $\rho$ and $\rho'$ be
the induced $\SL(2;\R)$-representations by $\overline{M}$ and $-\overline{M}$
respectively. 
The triple for $\rho$ is equal to that of $\rho'$.
Note that $e(-\overline{M}) = -e(\overline{M})$ and
$h$ is send to $\sh(-a|e(-\overline{M})|)$
in $\widetilde{\PSL}(2;\R)$ under $\rho'$.
By a similar argument in the proof of Lemma~\ref{lemma:traces_rho_e},
we can show that 
\[
  \tr \rho' (x)
  = 2 \cos \frac{a|e(-\overline{M})|b_1}{a_1}\pi 
  = (-1)^{-q_1} 2 \cos \left(\frac{-\beta_1}{a_1} \pi \right) 
  = \tr \rho(x).
\]
Also we have $\tr \rho' (y) = \tr \rho(y)$ and $\tr \rho' (z) = \tr \rho(z)$.


\section{Examples}
\subsection{The Brieskorn homology 3-sphere $\Sigma(2,3,6n+1)$}
We consider the $\SL(2;\R)$-representations of 
$\pi_1 \Sigma(2,3,6n+1)$.
It is known that the Brieskorn homology 3-sphere $\Sigma(2,3,6n+1)$
is obtained by $1/n$-surgery along the torus knot of type $(2, 3)$ in $S^3$.
Here we assume that $n > 0$.
The Casson invariant $|\lambda(\Sigma(2,3,6n+1))|$ equals to $n$ and 
the $\SL(2;\C)$-Casson invariant $\lambda_{\SL(2;\C)}(\Sigma(2,3,6n+1))$ equals to $3n$ 
by \cite[Theorem~2.3]{Boden-Curtis}.
Since the difference $\lambda_{\SL(2;\C)}(\Sigma(2,3,6n+1)) - 2|\lambda(\Sigma(2,3,6n+1))|$
is $n$,
we have $n$ conjugacy classes of irreducible $\SL(2;\R)$-representations.

\subsubsection*{The euler classes of $\Gamma(2, 3, 6n+1)$}
There exist $n$ euler classes 
$\beta x_0 + \beta_1 x_1 + \beta_2 x_2 + \beta_3 x_3 \in H^2(\Gamma(2,3,6n+1))$
such that
$\beta = -1$ and $\beta_1 / 2 + \beta_2 / 3 + \beta_3 / (6n+1) < 1$.
They are given by
\[-x_0 + x_1 + x_2 + k x_3\]
where $1 \leq k \leq n$.

We denote the corresponding Seifert fibered space by $\overline{M}_k$.
The order of $H_1(\overline{M}_k;\Z)$ is given by $6(n-k)+1$.
Note that $\overline{M}_n$ is the Brieskorn homology 3-sphere $\Sigma(2, 3, 6n+1)$.
We can change the Seifert invariant of $\overline{M}_n$ from 
$\{0; (1,-1), (2,1), (3,1), (6n+1,n)\}$ to $\{0;(1,0),(2,1),(3,-2),(6n+1,n)\}$.
The fundamental group of $\pi_1 \overline{M}_n$ is expressed as
\[
\pi_1 \overline{M}_n
=\langle x, y, z, h \,|\, \hbox{$h$ central}, x^2=h^{-1}, y^3=h^2, z^{6n+1}=h^{-n}, xyz=1\rangle
\]

\subsubsection*{The triples of $\tr X$, $\tr Y$ and $\tr Z$}
By Lemma~\ref{lemma:traces_rho_e}, we have the $\SL(2;\R)$-representations
induced from the euler classes. The corresponding triples of traces are given by 
\begin{align*}
  \tr X &= 2 \cos \left(-\frac{6(n-k)+1}{2} \pi \right) =0, \\
  \tr Y &= 2 \cos \left(\frac{2(6(n-k)+1)}{3} \pi \right) = -1, \\
  \tr Z &= 2 \cos \left(-\frac{(6(n-k)+1)n}{6n+1} \pi \right)  \\
  &=  2 \cos \left(- \Big(n-k + \frac{k}{6n+1}\Big) \pi \right) \\
  &= \begin{cases}
    2 \cos \left(\frac{k}{6n+1} \pi \right) & \hbox{$n-k$ is even}; \\
    2 \cos \left(\pi - \frac{k}{6n+1}\pi \right) & \hbox{$n-k$ is odd}.
  \end{cases}
\end{align*}
Note that the trace $\tr Z = 2\cos \frac{\ell}{6n+1}\pi$ for an $\SU(2)$-representation
must satisfy that $\ell \equiv n \pmod{2}$ and 
\[
\frac{1}{6} < \frac{\ell}{6n+1} < \frac{5}{6}.
\]
We can express explicit $\tr Z$ for $\SU(2)$-representations as
\[
\tr Z =
2 \cos \left(\frac{n+2}{6n+1} \pi \right),
2 \cos \left(\frac{n+4}{6n+1} \pi \right),
\ldots,
2 \cos \left(\frac{5n}{6n+1} \pi \right).
\]

We have seen that $\Sigma(2, 3, 6n+1)$ appears in $\overline{M}_k$.
There exist examples in which $\Sigma(a_1, a_2, a_3)$ does not appear in the sequence of
Seifert fibered spaces induced by euler classes in $E$.

\subsection{The Brieskorn homology 3-sphere $\Sigma(3,5,7)$}
Under the constraint that $b=-1$ 
and $0 < b_1 < 3$, $0 < b_2 < 5$ and $0 < b_3 < 7$,
we can choose the Seifert invariant as
$\{0; (1,-1), (3, 2), (5, 1), (7, 1)\}$.
Since $2/3 + 1/5 + 1/7 >1$, 
$\Sigma(3,5,7)$ does appear in the sequence of $\overline{M}$.
In fact, we have $4$ euler classes satisfying 
that the condition (a) in Theorem~\ref{thm:JankinsNeumann}.
They are given by
\begin{gather*}
  -x_0 + x_1 + x_2 + x_3,  \quad -x_0 + x_1 + x_2 + 2x_3, \\
  -x_0 + x_1 + x_2 + 3x_3, \quad -x_0 + x_1 + 2x_2 + x_3.
\end{gather*}
We can see that
every corresponding Seifert fibered space $\overline{M}$ has the non--trivial
homology group $H_1(\overline{M};\Z)$
by calculating the orders.

\bigskip

\noindent
\textit{Acknowledgments}.
This paper was written while the authors were visiting 
Aix-Marseille University. 
They would like to express their sincere thanks for the hospitality. 
They also wish to express their thanks to Joan Porti for his helpful comments.
This research was supported in part by JSPS KAKENHI  25400101 and 26800030. 


\end{document}